\theoremstyle{plain}
\newtheorem{thm}{Theorem}[section]
\newtheorem{prop}[thm]{Proposition}
\newtheorem{cor}[thm]{Corollary}
\newtheorem{lemma}[thm]{Lemma}
\theoremstyle{definition}
\newtheorem{defn}[thm]{Definition}
\theoremstyle{remark}
\newtheorem{rem}[thm]{Remark}
\newtheorem{rems}[thm]{Remarks}
\newtheorem{example}{Example}[section]
\newtheorem{examples}[example]{Examples}
\numberwithin{equation}{section}
\newcounter{numl}
\newcommand{\labelnuml}{\textup{(\arabic{numl})}}
\newenvironment{numlist}{\begin{list}{\labelnuml}%
{\usecounter{numl}\setlength{\leftmargin}{0pt}%
\advance\@listdepth1\relax%
\setlength{\itemindent}{2\parindent}%
\setlength{\itemsep}{\smallskipamount}
\def\makelabel ##1{\hss \llap {\upshape ##1}}}}{\advance\@listdepth-1\relax%
\end{list}}
\newenvironment{bulletlist}{\begin{list}{\labelitemi}%
{\setlength{\leftmargin}{\parindent}%
\advance\@listdepth1\relax%
\def\makelabel ##1{\hss \llap {\upshape ##1}}}}{\advance\@listdepth-1\relax%
\end{list}}
\newcommand{\acknowledge}{\subsection*{Acknowledgements}}
\newcommand{\thismonth}{\ifcase\month\or
  January\or February\or March\or April\or May\or June\or
  July\or August\or September\or October\or November\or December\fi
  \space\number\year}
\renewcommand{\@secnumfont}{\relax}
\newcommand{\low}{\@ifnextchar^{}{^{\vphantom x}}}
\newcommand{\high}{\@ifnextchar_{}{_{\vphantom I}}}
\DeclareSymbolFont{script}{U}{eus}{m}{n}
\DeclareMathSymbol{\EuWedge}{0}{script}{"5E}
\DeclareMathAlphabet{\mathrmsl}{OT1}{cmr}{m}{sl}
\newcommand{\rssymb}[2]{\newcommand{#1}{{\mathrmsl{#2}}}}
\newcommand{\calsymb}[2]{\newcommand{#1}{{\mathcal{#2}}}}
\newcommand{\bbsymb}[2]{\newcommand{#1}{{\mathbb{#2}}}}
\newcommand{\liealg}[2]{\newcommand{#1}{{\mathfrak{#2}}\low}}
\newcommand{\liealr}[2]{\renewcommand{#1}{{\mathfrak{#2}}\low}}
\newcommand{\lieoper}[2]{\newcommand{#1}{\mathop
  {\mathfrak{#2}\null}\nolimits}}
\newcommand{\oper}[3][n]{\newcommand{#2}{\mathop
  {\mathrm{#3}\null}\ifx n#1\nolimits\else\limits\fi}}
\newcommand{\rsoper}[3][n]{\newcommand{#2}{\mathop
  {\mathrmsl{#3}\null}\ifx n#1\nolimits\else\limits\fi}}
\bbsymb\C{C} \bbsymb\F{F} \bbsymb\HQ{H}\bbsymb\N{N} \bbsymb\Q{Q}
\bbsymb\R{R} \bbsymb\U{U} \bbsymb\V{V} \bbsymb\W{W} \bbsymb\Z{Z}
\calsymb\cA{A} \calsymb\cB{B} \calsymb\cC{C} \calsymb\cD{D} \calsymb\cE{E}
\calsymb\cF{F} \calsymb\cG{G} \calsymb\cH{H} \calsymb\cI{I} \calsymb\cJ{J}
\calsymb\cK{K} \calsymb\cL{L} \calsymb\cM{M} \calsymb\cN{N} \calsymb\cO{O}
\calsymb\cP{P} \calsymb\cQ{Q} \calsymb\cR{R} \calsymb\cS{S} \calsymb\cT{T}
\calsymb\cU{U} \calsymb\cV{V} \calsymb\cW{W} \calsymb\cX{X} \calsymb\cY{Y}
\calsymb\cZ{Z}
\liealg\g{g} \liealg\h{h} \liealg\p{p} \liealg\q{q} \liealr\t{t}
\newcommand{\eps}{\varepsilon}
\newcommand{\gam}{\gamma} 
\newcommand{\lam}{\lambda}
\newcommand{\Sig}{{\mathrmsl\Sigma}}
\newcommand{\Om}{{\mathrmsl\Omega}}
\renewcommand{\geq}{\geqslant} \renewcommand{\leq}{\leqslant}
\rsoper\End{End} \rsoper\Hom{Hom}                
\rsoper\Sym{Sym} \rsoper\Skew{Skew}
\rsoper\Aut{Aut} \rsoper\SAut{SAut}              
\rsoper\GL{GL}\rsoper\SL{SL}\rsoper\PGL{PGL}\rsoper\PSL{PSL}\rsoper\Symp{Sp}
\rsoper\CO{CO}\rsoper\On{O} \rsoper\SO{SO}  \rsoper\Pin{Pin}\rsoper\Spin{Spin}
\rsoper\CU{CU}\rsoper\Un{U} \rsoper\SU{SU}
\rsoper\Diff{Diff} \rsoper\SDiff{SDiff}
\lieoper\der{der}                                
\lieoper\gl{gl} \lieoper\sgl{sl}\lieoper\symp{sp}
\lieoper\co{co} \lieoper\so{so} \lieoper\spin{spin}
\lieoper\cu{cu} \lieoper\un{u}  \lieoper\su{su}
\rsoper\Vect{Vect} \rsoper\Ham{Ham}
\rsoper\Stab{Stab} \lieoper\stab{stab} 
\oper\real{Re}  
\newcommand{\ip}[1]{\langle#1\rangle}
\newcommand{\norm}[2][]{|\mkern-2mu|#2|\mkern-2mu|
  _{\lower1pt\hbox{${}_{#1}$}}}
\newcommand{\Norm}[2][]{\bigl|\mkern-3mu\bigr|#2\bigr|\mkern-3mu\bigr|
  _{\lower1pt\hbox{${}_{#1}$}}}
\newcommand{\Dsum}{\bigoplus}               
\newcommand{\tens}{\otimes}                 
\newcommand{\Wedge}{\EuWedge}               
\newcommand{\del}{\partial}                 
\newcommand{\Gr}{\mathrmsl{Gr}}             
\newcommand{\Proj}{\mathrmsl{P}}            
\newcommand{\st}{\mathrel{|}}               
\rsoper\dimn{dim}                           
\rsoper\kernel{ker}\rsoper\image{im}        
\rsoper\alt{alt}   \rsoper\sym{sym}         
\rsoper\Ad{Ad}     \rsoper\ad{ad}           
\rsoper\CoAd{CoAd} \rsoper\coad{coad}       
\rsoper\trace{tr}  \rsoper\trfree{tf}       
\rsoper\detm{det}                           
\rsoper\Vol{Vol}                            
\rsoper\divg{div}                           
\renewcommand{\d}{{\mathrmsl{d}}}           
\rssymb\iden{id}                            
\rssymb\vol{vol}                            
\newcommand{\interior}{\mathinner\lrcorner} 
\newcommand{\sub}{\subseteq}
\newcommand{\vspn}[1]{\left<#1\right>}
\rsoper{\pr}{pr}
\newcommand{\bp}{{\boldsymbol p}}
\newcommand{\bs}{{\boldsymbol s}}
\newcommand{\bt}{{\boldsymbol t}}
\newcommand{\bw}{{\boldsymbol w}}
\newcommand{\Lb}{\cL}
\newcommand{\cod}{{\mathscr U}}
\newcommand{\cov}{{\mathscr V}}
\newcommand{\cow}{{\mathscr W}}
\newcommand{\as}{{\mathscr M}}
\newcommand{\ned}{{\mathscr N}}
\newcommand{\uk}{{\boldsymbol u}}
\newcommand{\bR}{{\boldsymbol R}}
\newcommand{\br}{{\boldsymbol r}}
\newcommand{\Uk}{U}
\newcommand{\qle}{{\mathscr E}}
\newcommand{\chm}{\kappa}
\newcommand{\chv}{\cX}
\begin{document}
\title{Hydrodynamic integrability via geometry}
\author{David M.J. Calderbank}
\address{Department of Mathematical Sciences, University of Bath,
Claverton Down, Bath, BA2 7AY, UK.}
\email{D.M.J.Calderbank@bath.ac.uk}
\begin{abstract} This paper develops a geometric approach to the theory of
  integrability by hydrodynamic reductions to establish an equivalence, for a
  large class of quasilinear systems, between hydrodynamic integrability and
  the existence of nets compatible with the geometry induced on the codomain
  of the system. This unifies and extends known results for three subclasses
  of such systems. The generalization is obtained by studying the algebraic
  geometry of the characteristic correspondence of the system, and by
  introducing a generalized notion of conjugate nets.
\end{abstract}
\maketitle

\section{Introduction}

The method of hydrodynamic reductions is a test for the integrability of
dispersionless systems introduced by E.~Ferapontov and
K.~Khusnutdinova~\cite{FKh3,FKhn}. It applies to PDE systems which can be
written in the following first order quasilinear form:
\begin{equation}\label{eq:qls}
A_1(\uk) \del_{t_1}\uk +\cdots+ A_n(\uk) \del_{t_n} \uk =0,
\end{equation}
where the unknown is a function $\uk\colon \as\to \cod$, $\bt=(t_1,\ldots
t_n)$ are standard coordinates on the open domain $\as\sub\t\cong\R^n$ (with
$n\geq 3$), the codomain $\cod$ is (for simplicity here) an open subset of
$\R^m$, and each $A_j\colon \cod\to M_{k\times m}(\R)$ is a given
matrix-valued function (so $k$ is the number of equations in the system).
Note that $A_1,\ldots A_n$ do not depend explicitly on $\bt$, so the system is
invariant under local translations in $\as$.

Fundamental examples include $N$-component hydrodynamic systems of the form
\begin{equation}\label{eq:hs}
\del_{t_j} R^a = \chm_{aj}(\bR) \del_{t_1} R^a, \qquad\qquad\qquad
  j\in\{2,\ldots n\},\quad a\in\{1,\ldots N\},
\end{equation}
for $\bR=(R^1,\ldots R^N)\colon \as\to \cov$, with $\cov$ open in $\R^N$, where
$\chm_{aj}$ are given functions of $\br=(r_1,\ldots r_N)\in \cov$ which satisfy
the compatibility conditions that
\[
\text{for all}\quad a\neq b\in \{1,\ldots N\}\quad\text{and}\quad
j\in\{2,\ldots n\}, \quad \del_{b}\chm_{aj} =\gam_{ab}(\br)
\,(\chm_{bj}-\chm_{aj}),
\]
with $\gam_{ab}(\br)$ independent of $j$ and $\del_b:=\del_{r_b}$.
Integrability for such systems was shown by Tsarev~\cite{Tsarev}, who called
them semi-hamiltonian systems of hydrodynamic type.

An $N$-component \emph{hydrodynamic reduction} of~\eqref{eq:qls} consists of
functions $\Uk\colon\cov\to\cod$ and $\chm_{aj}\colon\cov\to\R$, for
$a\in\{1,\ldots N\}$ and $j\in\{2,\ldots n\}$, such that~\eqref{eq:hs} is
compatible and $\uk=\Uk\circ \bR$ satisfies~\eqref{eq:qls} if $\bR=(R^1,\ldots
R^N)$ satisfies~\eqref{eq:hs}. These requirements impose a PDE system on the
functions $\Uk$ and $\chm_{aj}$, and~\eqref{eq:qls} is \emph{hydrodynamically
  integrable} if this PDE system is compatible for all $N\geq 2$ (although it
suffices to check $N=3$~\cite{FKh3,FKhn}).

Hydrodynamic integrability is known to be equivalent to the existence of a
dispersionless Lax pair in some cases~\cite{BFT,DFKN1,DFKN2,FHK,FKhc2}, but it
does not require one to find the Lax pair. Thus it has the benefit that it is
an algorithmic test of integrability (for this class of systems). However, it
is computationally intensive, and for all but the simplest cases, carrying out
the test requires symbolic computer algebra. Other algorithmic tests are
available when $n=3$ or $4$ using Einstein--Weyl or self-dual conformal
geometry respectively (called ``integrable background geometries''
in~\cite{Cal:ibg}), but these only apply when the characteristic variety of
the system (see below) is a quadric---see~\cite{BFKN,CK} for the general
relation to Lax pairs in this setting.  In some overlapping cases,
Einstein--Weyl/self-dual integrability and hydrodynamic integrability are
known to be equivalent~\cite{DFKN1,DFKN2,FK1}, but this is established by
direct computation, and a conceptual explanation is lacking.

\begin{example}\label{ex:dKP} To illustrate the process in a
case amenable to hand computation, let $n=3$, $(t_1,t_2,t_3)=(t,x,y)$ and
consider an equation of the form
\begin{equation}\label{eq:g-dKP}
(u_x + \tau(u) u_t)_t = u_{yy},
\end{equation}
for a scalar function $u$, where $\tau$ is a given function of one variable,
and $t,x,y$ subscripts denote partial derivatives. Then~\eqref{eq:g-dKP} is
equivalent to the first order quasilinear system
\begin{align}\label{eq:dKP1}
u_y-v_t&=0, & u_x + \tau(u) u_t - v_y&= 0,
\end{align}
for $\uk=(u,v)$. A hydrodynamic reduction in this case consists of functions
$U$, $V$, $\mu_a=\chm_{a2}$ and $\lam_a=\chm_{a3}$ on an open subset $\cov$ of
$\R^N$, and the compatibility of~\eqref{eq:hs} requires that
\begin{equation}\label{eq:3dcomp}
\frac{\del_b\mu_a}{\mu_b-\mu_a}=\frac{\del_b\lam_a}{\lam_b-\lam_a}
\qquad\text{for all} \qquad a\neq b.
\end{equation}
Using the chain rule for $u=U\circ \bR$ and $v=V\circ \bR$, the system
\begin{align}\label{eq:3dhs}
R^a_x &=\mu_a(\bR) R^a_t,  & R^a_y&=\lam_a(\bR) R^a_t,
\end{align}
yields
\begin{align*}
u_y-v_t&=\mathop{\textstyle\sum}_a (\lam_a\, \del_aU - \del_a V)R^a_t, &
u_x + \tau(u) u_t - v_y
&= \mathop{\textstyle\sum}_a\bigl((\mu_a+\tau(U))\,\del_a U-\lam_a\,\del_aV\bigr)
R^a_t.
\end{align*}
Hence to satisfy~\eqref{eq:dKP1} for any solution $\bR$ of~\eqref{eq:3dhs}, the
ansatz requires that
\[
\text{for all}\quad a\in\{1,\ldots N\},\quad \lam_a\, \del_aU=\del_a V\quad
\text{and}\quad (\mu_a+\tau(U))\,\del_a U=\lam_a\, \del_aV.
\]
In particular
\[
\mu_a+\tau(U)=\lam_a^2,
\]
which is called the \emph{dispersion relation} and can be used to eliminate
$\mu_a$.  Then~\eqref{eq:3dcomp} becomes
\[
\frac{2\lam_a\del_b\lam_a-\tau'(U)\del_b U}{\lam_b^2-\lam_a^2}
=\frac{\del_b\lam_a}{\lam_b-\lam_a},\quad \text{i.e.,}\quad
\del_b\lam_a=-\frac{\tau'(U)\,\del_b U}{\lam_b-\lam_a},
\]
for $a\neq b$, while the symmetry of $\del_b\del_a V=\del_b(\lam_a\,\del_a U)$
in $a,b$ implies
\[
\del_b\del_a U
=-2\frac{\tau'(U)\,\del_aU\,\del_bU}{(\lam_b-\lam_a)^2}
\]
(and then $V$ may also be eliminated). Hydrodynamic integrability requires
that no further equations for $U,\lam_a$ arise in this way. In particular, the
symmetry of $\del_c\del_b\lam_a$ in $b,c$ (for $a,b,c$ distinct) implies that
$\tau''=0$, and the symmetry of $\del_c\del_b\del_a U$ is then consistent with
the system.  Thus~\eqref{eq:g-dKP} is hydrodynamically integrable if and only
if either $\tau$ is constant, so that~\eqref{eq:g-dKP} is linear, or $\tau$
has degree $1$, so that~\eqref{eq:g-dKP} is equivalent by an affine
transformation of $u$ to the dispersionless Kadomtsev--Petviashvilli (dKP)
equation $(u_x + u u_t)_t = u_{yy}$.
\end{example}

The computational intensiveness motivates the search for a geometric
underpinning to hydrodynamic integrability, and there is much that is already
understood. First the dispersion relation has a geometric interpretation.  In
the case of equation~\eqref{eq:g-dKP}, it states that for each $a$,
$[1,\mu_a(r),\lam_a(r)]$ is a point on the ($u$-dependent) projective variety
\[
\{[\xi_1,\xi_2,\xi_3]\in \Proj(\R^3):\xi_1 \xi_2 + \tau(u) \xi_1^2=\xi_3^2\},
\]
at $u=U(r)$. This is the \emph{characteristic variety} of~\eqref{eq:g-dKP},
and for hydrodynamic reductions of general quasilinear systems~\eqref{eq:qls},
the \emph{characteristic momenta} $\theta_a=\sum_{j=1}^n\chm_{aj}\,\d t_j$,
with $\chm_{a1}=1$, always define points $[\theta_a]$ on the characteristic
variety of the equation.

In the important case $n=3$, A. Odeskii and V. Sokolov~\cite{Ode,OS} have
axiomatised and studied the systems of Gibbons--Tsarev type~\cite{GT} arising
from hydrodynamic reductions. Here the characteristic variety is a bundle of
curves over $\cod$ and so the ``Gibbons--Tsarev structure'' may be described
using $1$-parameter families of vector fields on $\cod$.

For arbitrary $n$, there are now several works~\cite{BFT,DFKN1,FHK} showing
that for particular classes of quasilinear systems~\eqref{eq:qls},
hydrodynamic reductions correspond to nice submanifolds with respect to some
interesting geometric structure on the codomain $\cod$ of $\uk$.

The main result of this paper places these latter classes in common
framework. One of the central ingredients is inspired by work of
A. Smith~\cite{Smith,Smith1,Smith-talk,Smith-notes}, who emphasises the role
not only of the characteristic variety, but also the so-called rank one
variety.  For a quasilinear system~\eqref{eq:qls}, the latter is quite
straightforward to define: the system imposes a linear constraint on the
derivative $\d \uk_z$ at each $z\in\as$, depending only on $p=\uk(z)$, hence
defining a $p$-dependent linear subspace $\qle$ of $m\times n$ matrices
($p\in\cod$); the \emph{rank one variety} is the projective variety $\cR^\qle$
in $\Proj(\qle)$ of rank one matrices in $\qle$ up to scale.  Thus it is a
$p$-dependent projective variety lying in the \emph{Segre variety} of all
$m\times n$ matrices of rank one, up to scale. The latter is canonically
isomorphic to the product $\Proj(\R^n)\times\Proj(\R^m)$ of the projective
spaces of $\t\cong \R^n$ and $\R^m$, because any rank one matrix factorizes as
a product $\theta\tens S$ of a column vector and a row vector, each determined
uniquely up to scale.

Thus the rank one variety has projections onto both $\Proj(\R^n)$ and
$\Proj(\R^m)$; the former is the characteristic variety $\chv^\qle$ of the
system.  I am unaware of a standard name for the latter projection $\cC^\qle$,
so I refer to it as the \emph{cocharacteristic variety} of the system. Note
that, geometrically speaking, the $\R^m$ appearing here is the tangent space
of the codomain $\cod$ at a given point $p\in\cod$. In this more invariant
language, the $p$-dependent linear subspace $\qle$ defining the equation is
really a vector subbundle of the vector bundle $\t\tens T\cod$ with fibre
$\t\tens T_p\cod$ at $p\in\cod$, and the cocharacteristic variety $\cC^\qle$
is a subbundle of $\Proj(T\cod)$, i.e., it defines a family of cones in the
tangent spaces of $\cod$, equipping $\cod$ with a potentially interesting
geometric structure.

A key observation about the method of hydrodynamic reductions is that the
dispersion relation arises because the characteristic momenta are projections
of points on the rank one variety. Indeed if $\uk=\Uk\circ \bR$ and $\bR$
satisfies~\eqref{eq:hs} then, by the chain rule,
\begin{align*}
\d \uk &= \bR^* \d \Uk\circ \d \bR = \sum_{a=1}^N \d R^a \tens \del_a \Uk(\bR)
=\sum_{a=1}^N f_a(\bR)\theta_a(\bR) \tens\del_a \Uk(\bR)
\end{align*}
where $\theta_a(\bR)=\sum_{j=1}^n \chm_{aj}(\bR) \d t_j$, and (assuming
$\chm_{a1}=1$ as before) $f_a(\bR)=\partial_{t_1} R^a$. For this to yield
solutions of~\eqref{eq:qls} for given $\Uk$ and any such $\bR$, $\theta_a\tens
\del_a \Uk$ must must be a section of $\qle$ for each $a\in \{1,\ldots N\}$,
and this is clearly also sufficient. Since $\theta_a\tens \del_a \Uk$ is a
rank one matrix, its span $[\theta_a\tens \del_a \Uk]$ must lie in the rank
one variety $\cR^\qle$; thus $[\theta_a]$ is characteristic and $[\del_a \Uk]$
is cocharacteristic. The latter condition means that the derivative of $\Uk$
maps coordinate lines in $\cov\sub\R^N$ onto curves in $\cod$ whose tangent
lines belong to the cocharacteristic variety $\cC^\qle$ at each point. This is
a special kind of \emph{net} (see e.g.~\cite{AG}) with respect to the
geometric structure $\cC^\qle$ defines on $\cod$.

In the classes of examples studied in~\cite{BFT,DFKN1,FHK}, such a net turns
out to be sufficient to recover the hydrodynamic reduction, as the coordinate
derivatives $\del_a \Uk$ implicitly contain information about the
characteristic momenta. This can be understood using the correspondence
between characteristic and cocharacteristic varieties given by the rank one
variety.  For well-behaved systems, this correspondence is a bijection, i.e.,
for each $[\theta]$ in the characteristic variety, there is a unique $[S]$ in
the cocharacteristic variety such that $[\theta\tens S]$ is in the rank one
variety, and vice versa. This is the case in Example~\ref{ex:dKP}: for given
$U$, the cocharacteristic variety consists of all $[u,v]$ in $\Proj(\R^2)$ and
the characteristic variety is the Veronese embedding of $\Proj(\R^2)$ as a
conic in $\Proj(\R^3)$. In general such a bijective rank one correspondence
means that the characteristic and cocharacteristic varieties are simply
different projective embeddings of the same underlying abstract variety.

Unfortunately this is not sufficient in general to describe hydrodynamic
reductions purely in terms of the cocharacteristic variety, because it is
necessary also to encode the compatibility condition of~\eqref{eq:hs}, and
this depends on the embedding of $\chv^\qle$. Motivated again
by~\cite{BFT,DFKN1,FHK}, I describe a class of \emph{compliant} quasilinear
systems for which the characteristic embedding can be recovered from the
cocharacteristic one, and a class of \emph{cocharacteristic nets} for which
the following result holds.

\begin{thm}\label{t:main}
  Let $\qle\leq\t^*\tens T\cod$ be a compliant quasilinear system and $N\leq
  \dim\cod$. Then up to natural equivalences, there is a bijection between
  generic $N$-component hydrodynamic reductions of $\qle$ and generic
  $N$-dimensional cocharacteristic nets in $\cod$.
\end{thm}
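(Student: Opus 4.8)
The plan is to prove the bijection by making precise the correspondence sketched in the introduction, namely that a hydrodynamic reduction is determined by the data $(\Uk,\theta_a,\chm_{aj})$ while a cocharacteristic net is determined by $(\Uk,[\del_a\Uk])$, and then showing that under the compliance hypothesis these two packages of data are interchangeable. First I would set up the forward map: given a generic $N$-component hydrodynamic reduction $(\Uk,\chm_{aj})$, the chain-rule computation already displayed shows that each $\theta_a\tens\del_a\Uk$ is a section of $\qle$, so $[\theta_a\tens\del_a\Uk]\in\cR^\qle$, whence $[\del_a\Uk]$ is cocharacteristic. The genericity assumption ($N\leq\dim\cod$, and the reduction generic) should guarantee that the $\del_a\Uk$ are pointwise linearly independent, so that $\Uk$ is an immersion and the $N$ coordinate directions define an honest $N$-dimensional net whose tangent lines lie in $\cC^\qle$; this is by definition a cocharacteristic net. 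I would check that equivalent reductions (under reparametrization of the $R^a$ and the natural symmetries) yield equivalent nets, so the map descends to equivalence classes.

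The reverse map is where the compliance hypothesis does the work. Given a generic $N$-dimensional cocharacteristic net, the tangent directions $[\del_a\Uk]$ are cocharacteristic, so by the bijective rank one correspondence (valid for well-behaved systems, and which I would take as part of, or a consequence of, compliance) each $[\del_a\Uk]$ determines a unique characteristic $[\theta_a]$ with $[\theta_a\tens\del_a\Uk]\in\cR^\qle$. Normalizing by $\chm_{a1}=1$ fixes representatives $\theta_a=\sum_j\chm_{aj}\,\d t_j$, thereby reconstructing the functions $\chm_{aj}$ from the net. The essential point of compliance is that this reconstruction is not merely pointwise: it must produce $\chm_{aj}$ that are genuine functions on $\cov$ (equivalently, on $\cod$ along the net) and, crucially, that satisfy the compatibility conditions of~\eqref{eq:hs}. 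I would formulate compliance precisely as the condition that the characteristic embedding is recovered from the cocharacteristic one in a way compatible with differentiation, so that the Gibbons--Tsarev-type compatibility equations for $\chm_{aj}$ become consequences of the net condition together with the integrability of the net as a submanifold-with-foliation of $\cod$.

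The two maps are then shown to be mutually inverse, which is essentially formal once both are defined: starting from a reduction, passing to its net, and reconstructing $\chm_{aj}$ via the rank one correspondence returns the original $\chm_{aj}$ by uniqueness in the correspondence; conversely, starting from a net and building a reduction returns the original net because $\Uk$ and its coordinate tangent directions are unchanged. I would take care to phrase both constructions so that the "natural equivalences" on each side match up --- reparametrizations of the Riemann invariants $R^a$ on the reduction side corresponding to reparametrizations of the coordinate curves of the net on the other.

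I expect the main obstacle to be the reverse direction, specifically verifying that the pointwise rank one correspondence assembles into compatible functions $\chm_{aj}$ satisfying the compatibility conditions of~\eqref{eq:hs}. This is precisely the defect flagged in the introduction --- that a bijective rank one correspondence alone is \emph{not} sufficient, because one must also encode the compatibility condition, which depends on the embedding of $\chv^\qle$. The whole force of the definition of \emph{compliant} systems is to repair this gap, so the crux of the proof is showing that compliance delivers exactly the differential identities needed to promote the pointwise correspondence to a compatible reduction. I would isolate this as the key lemma and organize the proof so that the surrounding bijection is formal modulo that lemma.
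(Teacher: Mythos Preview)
Your overall architecture is right: the bijection factors through the chain-rule observation that $[\theta_a\tens\del_a\Uk]\in\cR^\qle$, and the reverse map uses the rank one correspondence to reconstruct $[\chm_a]$ from $[\del_a\Uk]$. But the heart of the proof---your ``key lemma''---is left as a black box, and the actual mechanism is both more concrete and more delicate than you suggest.

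What you are missing is the specific role of the injection $\Phi^\qle\colon T\cod\hookrightarrow\t^*\tens\cV^\qle$ (condition (3) of compliance). This writes each $\del_a\Uk$ as $\chm_a\tens v_a$ for a section $v_a$ of $\Uk^*\cV^\qle$, so that in $\t^*$-components (after normalizing $\chm_{a1}=1$) one has $\del_b\Uk_k=\chm_{bk}\,\del_b\Uk_1$. Differentiating by $\del_a$ and using the symmetry $\del_a\del_b\Uk_k=\del_b\del_a\Uk_k$ yields an identity relating $\del_a\chm_{bk}$, $\del_b\chm_{ak}$ and $\del_a\del_b\Uk_1$. The analysis then \emph{splits according to the rank of $\cV^\qle$}: when $\mathrm{rank}\,\cV^\qle\geq 2$, generic linear independence of $v_a$ and $v_b$ (secured by compliance condition (4)) forces the hydrodynamic compatibility automatically; when $\mathrm{rank}\,\cV^\qle=1$, compatibility is \emph{not} automatic and is equivalent to the net being \emph{conjugate}, i.e., $\del_a\del_b\Uk\in\mathrm{span}\{\del_a\Uk,\del_b\Uk\}$.

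This dichotomy is precisely why the definition of cocharacteristic net has a second clause (conjugacy when $\cV^\qle$ is a line bundle), which your proposal never mentions. Without it, in the rank-one case the reverse map simply fails: a net with cocharacteristic tangent lines need not yield a compatible hydrodynamic system. So your ``key lemma'' cannot be proved as stated; it must be replaced by the two-case computation above, and the statement of the bijection must invoke the conjugate-net condition on the net side when $\mathrm{rank}\,\cV^\qle=1$.
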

It follows that hydrodynamic integrability for compliant systems is equivalent
to the existence sufficiently many cocharacteristic nets in $\cod$, a
condition which is amenable to analysis by differential geometric and/or
representation theoretic techniques (see e.g.~\cite{BFT,FK2,Smith,Smith1}).

Note that Example~\ref{ex:dKP} is not compliant. However, even when a
quasilinear system is not compliant, there may be a reformulation of the
system (for example by prolongation or differential covering) which is, and
this can be done for the dKP equation (see Remark~\ref{r:dKP}).

The structure of the paper is as follows. In Section~\ref{sec:affine}, I
explain in more detail the affine geometry of first order quasilinear systems
and the characteristic correspondence. Section~\ref{sec:compliant} reviews the
algebraic geometry of projective embeddings in order to define the notion of a
compliant quasilinear system. In Section~\ref{sec:cc-net}, I turn to the
differential geometry of nets, and explain what is a cocharacteristic
net. This turns out to include conjugate nets as a special case. The proof of
Theorem~\ref{t:main} is then given in Section~\ref{sec:hi}.

Throughout the paper, all manifolds are assumed connected, and can be real or
complex, and all maps between manifolds are smooth over the base field (hence
holomorphic in the complex case). For a vector bundle $\mathscr E$ over a
manifold $\mathscr M$, the space of (smooth) $k$-forms on $\mathscr M$ with
values in $\mathscr E$ will be denoted $\Om^k_{\mathscr M}(\mathscr E)$, or
$\Om^k_{\mathscr M}(V)$ if $\mathscr E=\mathscr M\times V$ is a trivial
bundle; when $k=0$ this is just the space of (smooth) sections of $\mathscr E$
or functions $\mathscr M\to V$. The derivative of a map $F\colon
\mathscr M\to \mathscr N$ between manifolds is denoted
$\d F\in\Omega^1_{\mathscr M}(F^*T\mathscr N)$, but in the case
that $\mathscr N$ is a vector space $V$, $F^*TV\cong\mathscr M\times V$
is trivial, so $\d F\in\Omega^1_{\mathscr M}(V)$.

\section{Quasilinear systems and their characteristic geometry}
\label{sec:affine}

\subsection{Affine geometry of quasilinear systems}

First order quasilinear systems~\eqref{eq:qls} are translation invariant, and
so the natural context for their study is affine geometry. An open subset
$\as$ of an affine space is an example of a flat affine manifold.  More
precisely, if the affine space is modelled on an $n$-dimensional vector space
$\t$, then there is an exact $1$-form $\d\bt\in \Om^1_\as(\t)$ which defines a
bundle isomorphism $T\as\cong \as\times\t$. Under this isomorphism, the
constant $\t$-valued functions are identified with the infinitesimal
generators of the local free and transitive translation action on $\as$.  The
affine coordinate $\bt\colon \as\to \t$ is defined up to an additive constant
in $\t$, which is usually fixed by specifying an origin $z\in\as$ with
$\bt(z)=0$. I will not do this herein, nor will I assume $\as$ is an open
subset of an affine space: it suffices to have an exact $1$-form $\d\bt$, the
\emph{tautological $1$-form}, inducing $T\as\cong \as\times\t$.

\begin{defn}\label{d:aff} Let $\uk\colon\as\to\cod$ be a map between manifolds,
with domain $\as$ a flat affine $n$-manifold as above, and derivative
$\d\uk\in \Om^1_\as(\uk^*T\cod)$.  Then the \emph{affine derivative} of $\uk$
is the unique $\psi\in\Om^0_\as(\t^*\tens\uk^*T\cod)$ with
$\d\uk=\ip{\psi,\d\bt}$, where angle brackets denote contraction of $\t$ with
$\t^*$.

In particular if $f$ is a scalar-valued function on $\as$, then its affine
derivative is in $\Om^0_\as(\t^*)$; if this function is constant, $f$ is said
to be \emph{affine}, and $\h$ will denote the space of affine functions on
$\as$.
\end{defn}

\begin{rems}\label{r:aff} In view of the definition, it is natural to denote
the affine derivative by $\d\uk/\d\bt$; indeed if $\as$ is $1$-dimensional,
this is the ordinary derivative of a function of $1$-variable, expressed as a
ratio of $1$-forms. More generally, in local affine coordinates,
\[
\d\bt = (\d t_1,\ldots \d t_n) \quad\text{and}\quad \frac{\d\uk}{\d \bt} =
(\partial_{t_1} \uk,\ldots \partial_{t_n} \uk).
\]
(the partial derivatives of $\uk$). 

The affine derivative restricts to a linear map from $\h$ to $\t^*$, with
kernel the constant functions; if this map surjects (as it does locally) then
there is a natural affine coordinate $\bt\colon \as\to \h^*$ defined by
$\ip{\bt(z),f}=f(z)$. This identifies $\as$ locally with the affine hyperplane
$\ip{\bt,1}=1$ in $\h^*$; in particular, $\ip{\d\bt,c}=0$ for any constant
function $c$, so $\d\bt$ takes values in $\t$, and agrees with the
tautological $1$-form, justifying the notation.
\end{rems}
Although the above invariant language is not essential, it provides a
convenient and computationally concise way to formalize the theory of
quasilinear systems.

\begin{defn} A \emph{quasilinear system \textup(QLS\textup)}, on maps from a
flat affine space $\as$ with tautological $1$-form $\d\bt\in \Om^1_\as(\t)$ to
an $m$-manifold $\cod$, is vector subbundle $\qle\leq \t^*\tens T\cod$ over
$\cod$; a \emph{solution} of the QLS $\qle$ is a map $\uk\colon \as\to \cod$
with $\d \uk/\d\bt \in\Om^0_\as(\uk^*\qle)$.
\end{defn}

If $\qle_p \leq \t^*\tens T_p\cod$ has codimension $k$ for each $p\in \cod$,
then it is locally the kernel of $k$ independent linear forms on $\t^*\tens
T_p\cod$ depending smoothly on $p$, and so the QLS imposes $k$ linear
constraints on the derivative of $\uk$. Identifying $\t$ with $\R^n$ or $\C^n$
(via affine coordinates $t_i$) and $T\cod$ locally with $\cod\times\R^m$ or
$\cod\times\C^m$, the $t_i$ components of these linear forms combine, for each
$j\in \{1,\ldots n\}$, into $k\times m$ matrix valued functions $A_j(p)$ of
$p\in\cod$, yielding the coordinate expression~\eqref{eq:qls} for the QLS.

\begin{example} Consider the first order formulation of the generalized dKP
equation~\eqref{eq:g-dKP} for $\uk=(u,v)\colon \as=\R^3\to \cod=\R^2$. Thus
the fibre of $\qle$ at $(u,v)\in\cod=\R^2$ is the set of $(u_t,u_x,u_y)\tens
(1,0)+(v_t,v_x,v_y)\tens (0,1)$ satisfying~\eqref{eq:dKP1}, which may be
solved for $(v_t,v_y)$ to give
\[
\qle_{(u,v)}=\{(u_t,u_x,u_y)\tens (1,0)+(u_y,v_x,u_x+\tau(u)u_t)\tens (0,1)\}.
\]
\end{example}

\begin{example}\label{ex:hs} A QLS in $n=2$ dimensions is often called a
\emph{hydrodynamic system}; it takes the form $B(\uk) \del_t\uk =
C(\uk)\del_x\uk$ for affine coordinates $(x,t)$ and matrix valued functions
$B,C$.  Multiplying by $\d x\wedge \d t$, this system can be rewritten as
$(B(u)\d x +C(u)\d t)\wedge \d \uk = 0$.

This has an analogue in any dimension $n$. For later use, the unknown map will
be denoted $\bR\colon \as\to \cov$, where the codomain $\cov$ is an
$N$-manifold. The hydrodynamic system is then determined by a section $A\in
\Om^0_\cov(\t^*\tens\mathrm{End}(T\cov))$, and $\bR$ solves this system if
\begin{equation}\label{eq:hs2}
\bR^*A \wedge \frac{\d \bR}{\d\bt} = 0 \quad \text{in}\quad
\Om^0_\as(\Wedge^2 \t^*\tens \bR^*T\cov),
\end{equation}
where the wedge product is on $\t^*$ and the action of $\End(T\cov)$ on
$T\cov$ is used to multiply coefficients. Thus $\qle$ is the kernel of the
bundle map $\t^*\tens T\cov\to \Wedge^2 \t^*\tens T \cov$ sending $\psi$ to
$A\wedge\psi$. The QLS can also be formulated as an exterior differential
system (EDS) on $\as\times\cov$ using the projections $\pi_\as$ and $\pi_\cov$
to $\as$ and $\cov$: for any $1$-form $\alpha$ on $\cov$,
$\pi_\cov^*\alpha(A)\in \Omega^1_{\as\times\cov}(\t^*)$ and
$\pi_\as^*\d\bt\in\Omega^1_{\as\times\cov}(\t)$, hence their contracted wedge
product $\ip{\pi_\cov^*\alpha(A)\wedge\pi_\as^*\d\bt}$ is a $2$-form on
$\as\times\cov$. Then $\bR$ solves~\eqref{eq:hs2} if and only if these
$2$-forms pullback to zero by $(\iden_\as,\bR)\colon \as\to \as\times\cov$,
i.e., the graph of $\bR$ is an integral manifold of the EDS (differential
ideal) generated by these $2$-forms.

Evidently if $\bR=\Psi\circ\tilde\bR$ for some diffeomorphism $\Psi$ of
$\cov$, then $\d \bR = \tilde\bR^*\d \Psi\circ \d \tilde\bR$, and so the
system transforms to $\ip{\tilde\bR^*\tilde A,\d\bt}\wedge\d \tilde\bR =0$
where $\tilde A = (\d \Psi)^{-1}\circ \Psi^*A \circ \d \Psi$.

The fundamental case for hydrodynamic integrability is when $A$ can be
simultaneously diagonalized by such a diffeomorphism $\Psi$, with the
eigenvectors tangent to coordinate lines on $\cov$. Thus there are coordinates
$r_a:a\in\cA=\{1,\ldots N\}$ on $\cov$ called \emph{Riemann invariants}, and
functions $\chm_a\colon\cov\to \t^*$ called \emph{characteristic momenta} such
that $A= \sum_{a\in \cA} \chm_a \d r_a\tens \del_{r_a}$. These coordinates may
be assumed (for local questions) to realise $\cov$ as an open subset of
$\R^N$.  Setting $\theta_a=\ip{\chm_a,\d\bt}$ for $a\in\cA$, the above EDS is
generated by the $2$-forms $\theta_a\wedge\d r_a:a\in\cA$ on $\as\times\cov$,
and the QLS $\qle$ is spanned by $\chm_a\tens\del_{r_a}:a\in\cA$.

Note that there is some residual gauge freedom in the system: each $\chm_a$
may be scaled by a function $f_a$ on $\cov$ (\emph{scaling equivalence}), and
each $r_a$ may be replaced by $r_a'=\rho_a(r_a)$ for functions $\rho_a$ of one
variable (\emph{reparametrization equivalence}); these do not change $\qle$,
or the differential ideal generated by $\theta_a\wedge\d r_a:a\in\cA$.
\end{example}

I now turn to the examples that this paper seeks to unify.  These examples
have in common that they all come from a PDE system, for a function
$\bw\colon\as\to\cow$ with values in an affine space $\cow$, which is
\emph{not a priori} a QLS, but which may be reformulated as a QLS on the
derivative of $\bw$. There are three cases: general grassmannian
equations~\cite{DFKN1,DFKN2}, in which the derivative of $\bw$ satisfies an
algebraic constraint; hessian or Hirota type
equations~\cite{FHK,Smith,Smith1}, in which $\bw$ is the derivative of a
scalar function $\varphi$ whose second derivative satisfies an algebraic
constraint, and invariant wave equations~\cite{BFT}, in which the equation on
$\bw$ is linear in the second derivative with coefficients depending only on
the first derivative (thus the equation is invariant under translation of
$\bw$). These three classes of examples will be referred to as types G, H and
I respectively.
 
\begin{examples}\label{ex:main} Let $\cow$ be an affine space modelled on a
vector space $V$ with $\dim V=q$. Then the derivative of a smooth map
$\bw\colon \as\to \cow$ may be viewed as a $V$-valued $1$-form
$\d\bw\in\Omega^1_{\as}(V)$ (strictly speaking this is $\bw^*\d\bs\circ \d\bw$
where $\d\bs\colon T\cow\to V$ is the tautological $1$-form of $\cow$)---hence
it has an affine derivative $\uk=\d\bw/\d\bt\colon \as\to\t^*\tens V$

Alternatively, for each $z\in \as$, the graph of the linear map
$\uk(z)\in\t^*\tens V\cong \Hom(\t.V)$ is in $\Gr_n(\t\oplus V)$, the
grassmannian of $n$-dimensional subspaces $\t\oplus V$, so $\uk$ may be viewed
as a map $\Gamma_{\uk}\colon \as\to \Gr_n(\t\oplus V);z\mapsto (\d\bt_z, \d
\bw_z)(T_z \as)$.  This viewpoint reveals $\PGL(\t\oplus V)$ as a natural
symmetry group~\cite{DFKN1,DFKN2}, with $\t^*\tens V$ being identified as the
open subset of $n$-dimensional subspaces of $\t\oplus V$ on which the
projection to $\t$ is an isomorphism.

Now an arbitrary $\uk\colon\as\to\t^*\tens V$ arises locally in this way from
$\bw\colon \as\to \cow$ if and only if $\d\uk/\d\bt\colon\as\to
\t^*\tens\t^*\tens V$ takes values in $S^2\t^*\tens V$. With respect to affine
coordinates $\bt=(t_1,\ldots t_n)$ on $\as$, $\uk$ has components
$\uk_i\colon\as\to V$, and this is simply the statement that for all
$i,j\in\{1,\ldots n\}$, $\del_{t_i}\uk_j=\del_{t_j}\uk_i$, i.e., the
integrability condition to write $\uk_i=\del_{t_i} \bw\colon\as\to V$ for each
$i\in\{1,\ldots n\}$.

There are now two ways to impose equations on $\uk$. First $\Gamma_\uk$ can be
required to take values in an $m$-dimensional submanifold of $\Gr_n(\t\oplus
V)$.  Let $\cod$ be the corresponding submanifold of the open subset
$\t^*\tens V$. The integrability condition on $\uk$ may be conveniently
described using the map $(\iden_\as,\uk)\colon \as\to \as\times \cod$ as
follows: let $\bp\in\Omega^0_{\as\times\cod} (\t^*\tens V)$ be the
tautological coordinate given by projection onto $\cod\sub \t^*\tens V$ and
let $\ip{\d\bp\wedge\d\bt}\in\Omega^2_{\as\times\cod}(V)$ be the wedge product
of $\d\bp$ with $\pi_\as^*\d\bt\in \Omega^1_{\as\times\cod}(\t)$ where their
values are contracted using the natural map $\t^*\tens V\times \t\to V$.  Then
the symmetry of $\d\uk/\d\bt$ means equivalently that $(\iden_\as,\uk)^*
\ip{\d\bp\wedge\d\bt}=0$. This motivates a second class of quasilinear
constraints on $\uk$ that $(\iden_\as,\uk)^*\ip{\d\bp\wedge\Phi}=0$ for
further differential forms $\Phi\in\Omega^k_{\as\times\cod}(\t)$ of the form
$\Phi=\sum_{I,j} F_{I,j}(\bp) \d t_I\tens \del_{t_j}$, where $I=\{i_1,\ldots
i_k\}$ is a multi-index with $1\leq i_1<i_2<\cdots< i_k\leq n$ and $\d t_I =\d
t_{i_1}\wedge \d t_{i_2}\wedge\cdots \wedge\d t_{i_k}$. Thus the QLS is the
EDS generated by components of $\ip{\d\bp\wedge\d\bt}$ and the additional
differential forms $\ip{\d\bp\wedge\Phi}$.

The three classes G,H,I of QLS are special cases of these EDS.
\begin{numlist}
\item[(G)] This is the case of a general submanifold $\cod$, with no
  additional differential forms $\Phi$. Since $\cod$ has dimension $m$, it has
  codimension $m'=qn-m$ and so may be described locally by equations
  $F_k(\bp)=0$ for $k\in\{1,\ldots m'\}$. If $\uk\colon \as\to\cod$ with
  $(\iden_\as,\uk)^*\ip{\d\bp\wedge\d\bt}=0$ then $\uk$ comes from a smooth
  map $\bw\colon\as\to\cow$ which solves the system
\begin{equation*}
  F_k(\d\bw/\d\bt) = 0\qquad\text{for all} \qquad k\in\{1,\ldots m'\}.
\end{equation*}
For any $p\in\cod$, the fibre $\qle_p$ of the QLS is the intersection of
$\t^*\tens T_p\cod$ with $S^2\t^*\tens V$.
\item[(H)] This is an important special case of (G) where $\cow=V=\t^*$ and
  the submanifold is a hypersurface in the lagrangian grassmannian of maximal
  isotropic subspaces in $\t\oplus V=\t\oplus \t^*$ with respect to the
  natural symplectic form given by $\omega(X_1+\xi_1,X_2+\xi_2)=
  \ip{\xi_1,X_2}-\ip{\xi_2,X_1}$. The lagrangian grassmannian has $S^2\t^*$ as
  an open subset which, given affine coordinates $\bt=(t_1,\ldots t_n)$, has
  coordinates $p_{ij}=p_{ji}$ ($i,j\in \{1,\ldots n\}$). Hence the
  hypersurface $\cod$ in $S^2\t^*$ is given locally by $F(\ldots,
  p_{ij},\ldots)=0$ for some function $F$ of $\frac12 n(n+1)$ variables.  Any
  $\uk\colon \as\to\cod$ with $(\iden_\as,\uk)^*\ip{\d\bp\wedge\d\bt}=0$ now
  comes from a scalar function $\varphi$ on $\as$ (via $\bw=\d\varphi/\d\bt$)
  which solves the equation
\begin{equation*}
F(\ldots,\del^2_{t_i,t_j} \varphi,\ldots) = 0.
\end{equation*}
In this case, for any $p\in\cod$, $T_p\cod\leq S^2\t^*$ is the kernel of $\d
F_p$, so $\qle_p$ is the intersection of $\t^*\tens T_p\cod$ with $S^3\t^*$.

\item[(I)] Let $q=1$ (so that $V$ is one-dimensional), with $\cod$ open in
  $\t^*\tens V\sub\Gr_n(\t\oplus V)$, and suppose
  $Q(\bp)\in\Omega^0_{\as\times\cod}(S^2\t)$ is a quadratic form on $\t^*$
  with coefficients depending only on $\bp$. Thus in affine coordinates
  $\bt=(t_1,\ldots t_n)$ with $\bp=(p_1,\ldots p_n)$, $Q=\sum_{i,j}
  F_{ij}(p_1,\ldots p_n)\del_{t_i}\tens \del_{t_j}$ with $F_{ij}=F_{ji}$, and
  defines $\Phi\in \Omega^{n-1}_{\as\times\cod}(\t)$ by the partial contraction
  $\Phi=\sum_{i,j}F_{ij}(p_1,\ldots p_n)\del_{t_i}\interior (\d
  t_1\wedge\cdots\wedge \d t_n)\tens \del_{t_j}$. Now if $(\iden_{\as},\uk)$
  pulls back both $\ip{\d\bp\wedge\d\bt}$ and $\ip{\d\bp\wedge\Phi}$ to zero,
  then $\uk$ comes from a scalar function $w$ on $\as$ which solves the
  equation
\begin{equation*}
\sum_{i,j} F_{ij}(\del_{t_1}w,\ldots \del_{t_n}w)\del^2_{t_i,t_j} w = 0.
\end{equation*}
At each $p\in \cod$, $\qle_p$ is the kernel of $Q(p)\tens\iden_V$ in $\t^*\tens
T_p\cod\cap S^2\t^*\tens V$.
\end{numlist}
\end{examples}

\begin{rem}\label{r:dKP}  Up to differential coverings, the dKP equation can
be reformulated as a QLS in each of these ways~\cite{BFT,DFKN1,FHK}. For
example, the equation
\begin{equation}\label{eq:pdKP}
  w_{xt} + w_t w_{tt} = w_{yy}
\end{equation}
has type I and so defines a QLS on the first derivatives $f=w_t$, $g=w_x$,
and $h=w_y$:
\[
f_x= g_t, \quad f_y=h_t,\quad g_y=h_x \quad\text{and}\quad f_x - f f_t = h_y.
\]
On the other hand, by differentiating~\eqref{eq:pdKP} with respect to $t$, it
follows that $u=w_t$ satisfies the dKP equation $(u_x+uu_t)_t=u_{yy}$. Finally,
writing~\eqref{eq:pdKP} as $(w_x+\frac12 w_t^{\,2})_t = w_{yy}$ yields a first
order form
\[
w_x+\tfrac12 w_t^{\,2} = v_y, \quad v_t = w_y
\]
of type G, and a further potential $\varphi$ with $\varphi_y=v$ and
$\varphi_t=w$ reduces this system to the equation $\varphi_{xt}+\tfrac12
\varphi_{tt}^{\;2}=\varphi_{yy}$ of type H.
\end{rem}

\subsection{The characteristic correspondence}

The projective bundle $\Proj(\t^*\tens T\cod)\to\cod$ has a subbundle
$\cR$ whose fibre at $p\in\cod$ is
\[
\cR_p:=\{[\xi\tens Z]\in \Proj(\t^*\tens T_p\cod) \st \xi\in\t^*,\, Z\in
T_p\cod\}.
\]
This is the image of the \emph{Segre embedding} of
$\Proj(\t^*)\times\Proj(T_p\cod)$, consisting of the projectivizations of rank
one tensors.

\begin{defn} Let $\qle\leq \t^*\tens T\cod$ be a QLS.
\begin{bulletlist}
\item The \emph{rank one variety} of $\qle$ is $\cR^\qle:=\cR\cap\Proj(\qle)$.
\item The \emph{characteristic} and \emph{cocharacteristic varieties} of
  $\qle$ are the projections $\chv^\qle$ and $\cC^\qle$ of $\cR^\qle$ to
  $\cod\times\Proj(\t^*)$ and $\Proj(T\cod)$ respectively.
\item The \emph{characteristic correspondence} of $\qle$ is the diagram
\begin{diagram}[size=1.3em,nohug]
& & \cR^\qle \\
&\ldTo^{\pi_\chv} & &\rdTo^{\pi_\cC} \\
\llap{$\cod\times\Proj(\t^*)\supseteq{}$} \chv^\qle &&&&
\cC^\qle\rlap{${}\sub\Proj(T\cod)$}
\end{diagram}
where $\pi_\chv$ and $\pi_\cC$ are the natural projection maps.
\end{bulletlist}
\end{defn}
\noindent 
For convenience, $\cR^\qle$, $\chv^\qle$ and $\cC^\qle$ will be assumed to be
fibre bundles over $\cod$ whose fibres are projective varieties, so that the
characteristic correspondence is a double fibration.

\begin{rem}\label{r:determined} For (complex, or real hyperbolic) determined
systems, the characteristic variety $\chv^\qle$ is a hypersurface, hence of
dimension $n-2$. Thus $\Proj(\qle_p)$ does not meet generic fibres of $\cR_p$
over $\Proj(\t^*)$, and the fibres it does meet, it generically meets in
dimension zero, hence in a single point because these fibres are projective
subspaces in the Segre embedding.  For $m\geq n$, the same is true for the
fibres over $\Proj(T_p\cod)$, and hence the characteristic correspondence maps
are isomorphisms, and $\cC^\qle_p$ has codimension $(m-1)-(n-2)=m-n+1$ in
$\Proj(T_p\cod)$.
\end{rem}

\setcounter{example}{0}
\renewcommand{\theexample}{\thesection.\arabic{example} \textit{bis}}

\begin{example}
For the generalized dKP equation~\eqref{eq:g-dKP}, the fibre of the rank one
variety $\cR^\qle$ at $(u,v)$ consists of the nonzero elements with
$(u_t,u_x,u_y)$ and $(u_y,v_x,u_x+\tau(u) u_t)$ linearly dependent, which
forces
\[
u_t v_x = u_y u_x, \quad u_y v_x=(u_x+\tau(u)
u_t) u_x,\quad \text{and}\quad u_y^2=u_t(u_x+\tau(u) u_t).
\]
The last equation is solved by
\[
u_t = c\lam_0^2,\quad u_x = c(\lam_1^2-\tau(u)\lam_0^2),\quad \text{and}\quad
u_y =c\lam_0\lam_1
\]
for constants $c,\lam_0,\lam_1$. Hence $c\lam_0^2 v_x =
c^2\lam_0\lam_1(\lam_1^2-u\lam_0^2)$ and $c\lam_0\lam_1 v_x =
c^2\lam_1^2(\lam_1^2-\tau(u)\lam_0^2)$, so either $c\lam_0=c\lam_1=0$ or
$\lam_0 v_x=c\lam_1(\lam_1^2-u\lam_0^2)$. Now if $\lam_0=0$, $c\lam_1=0$,
otherwise $c$ is divisible by $\lam_0$; thus, without loss, $c=\lam_0$,
$v_x=\lam_1^2(\lam_1^2-\tau(u)\lam_0^2)$ and
\[
\cR^\qle_{(u,v)}=\{(\lam_0^2,\lam_1^2-\tau(u)\lam_0^2,\lam_0\lam_1)\tens
(\lam_0,\lam_1): \lam_0,\lam_1\in\R\},
\]
the cocharacteristic variety is $\cC^\qle=P^1$, and the characteristic variety
$\chv^\qle$ is a $u$-dependent conic in $P^2$.
\end{example}

\begin{example} For a (diagonalizable) hydrodynamic system, with Riemann
invariants and characteristic momenta $r_a,\chm_a:a\in\cA$, the description of
$\qle$ immediately yields that $[\chm_a\tens \del_{r_a}]$ are in the rank one
variety. It follows generically (if the $\chm_a$ are pairwise independent)
that
\[
\chv^\qle=\{[\chm_a]:a\in\cA\}, \quad\cC^\qle =\{[\del_{r_a}]:a\in\cA\}\quad
\text{and}\quad \cR^\qle=\{[\chm_a\tens\del_{r_a}]:a\in\cA\}.
\]
\end{example}

\begin{examples} For a QLS $\qle$ of type G, H or I, it is straightforward
to describe the characteristic correspondence using the description of $\qle$
given previously.
\begin{numlist}
\item[(G)] At each $p\in\cod$,
\begin{align*}
\chv^\qle_p &= \{[\xi]\in\Proj(\t^*)\st \xi\tens v\in T_p\cod
\text{ for some nonzero } v\in V\}\\
\cC^\qle_p &= \{[\xi\tens v]\in\Proj(T_p\cod)\}\\
\cR^\qle_p &= \{[\xi\tens\xi\tens v]\in\Proj(\t^*\tens T_p\cod)\}.
\end{align*}
Now $\Proj(\t^*)\times\Proj(V)$ has dimension $n+q-2$, so $\Proj(T_p\cod)$
generically meets it in a variety of dimension $n+q-m'-2$, so the system is
determined for $m'=q$, in which case for any $[\xi]\in\chv^\qle$ there is a
unique $[v]\in\Proj(V)$ such that $[\xi\tens v]\in \cC^\qle$. The
references~\cite{DFKN1,DFKN2} concern the cases $q=m'=2$, with $n\in\{3,4\}$.

\item[(H)] In this case $\d F_p$ induces a quadratic form $Q_p$ on $\t^*$ for
  each $p\in\cod$, and
\begin{align*}
\chv^\qle_p &= \{[\xi]\in\Proj(\t^*)\st Q_p(\xi)=0\}\\
\cC^\qle_p &= \{[\xi\tens\xi]\in\Proj(S^2\t^*)\st Q_p(\xi)=0\}\\
\cR^\qle_p &= \{[\xi\tens\xi\tens\xi]\in\Proj(S^3\t^*)\st Q_p(\xi)=0\}.
\end{align*}

\item[(I)] Again there is a quadratic form $Q_p$ (up to scale) on $\t^*$ for
each $p\in \cod$, and
\begin{align*}
\chv^\qle_p &= \{[\xi]\in\Proj(\t^*)\st Q_p(\xi)=0\}\\
\cC^\qle_p &= \{[\xi\tens v]\in\Proj(\t^*\tens V)\st Q_p(\xi)
=0\}\\
\cR^\qle_p &= \{[\xi\tens\xi\tens v]\in\Proj(S^2\t^*\tens V)\st
Q_p(\xi)=0\}.
\end{align*}
\end{numlist}
Types H and I are thus determined equations wherever $Q_p\neq0$.
\end{examples}

\renewcommand{\theexample}{\thesection.\arabic{example}}

\section{Algebraic geometry: compliant quasilinear systems}\label{sec:compliant}

\subsection{Projective embeddings} Motivated by the previous section, I focus
now on the case that the fibres over each $p\in\cod$ of the characteristic,
cocharacteristic and rank one varieties are projective embeddings of the same
abstract variety.  This is a standard situation in elementary algebraic
geometry, where projective embeddings are described using line bundles (see
e.g.~\cite{GH}).

Let $\Xi\sub \Proj(V)$ be a projective variety and let $L\to \Xi$ be the
restriction to $\Xi$ of the dual tautological line bundle $\cO_V(1)\to
\Proj(V)$, whose fibre at $\ell\in \Proj(V)$ is the dual vector space to
$\ell\leq V$: $\cO_V(1)_\ell=\ell^*$. Since the space of regular global
sections $H^0(\Proj(V),\cO_V(1))$ is $V^*$, restriction to $\Xi$ defines a
canonical linear map $V^*\to H^0(\Xi,L)$. Furthermore, this linear map is
injective unless $\Xi$ is contained in a hyperplane in $\Proj(V)$.

\begin{defn} A \emph{linear system} on an abstract variety $\Xi$ is a line
bundle $L\to\Xi$ together with a linear subspace $W$ of $H^0(\Xi,L)$, and its
\emph{base locus} is $B=\{x\in\Xi\st \forall\, w\in W,\; w(x)=0\}$. The linear
system is said to be \emph{complete} if $W=H^0(\Xi,L)$ and
\emph{basepoint-free} if $B=\varnothing$.
\end{defn}

The reason for these definitions is that if $(L,W)$ is a basepoint-free linear
system on $\Xi$, then there is a map $\Xi\to \Proj(W^*)$ sending $x\in\Xi$ to
the element of $\Proj(W^*)$ corresponding to the hyperplane $\{w\in
W:w(x)=0\}$ (the annihilator of this hyperplane). If this map is an embedding,
the linear system is said to be \emph{very ample}. In particular, if the
complete linear system yields an embedding, the line bundle $L$ is said to be
very ample.

\begin{lemma}\label{l:mult} Let $L_1$ and $L_2$ be line bundles over $\Xi$.
Then there is a canonical linear map
\[
H^0(L_1)\tens H^0(L_2) \to H^0(L_1\tens L_2); \quad \ell_1\tens\ell_2\mapsto
\ell
\]
defined by pointwise multiplication: $\ell(x)=\ell_1(x)\tens \ell_2(x)$.
\end{lemma}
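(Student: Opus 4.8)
The plan is to construct the map directly by the stated pointwise formula and then verify the three things that make the statement precise: that $\ell$ lands in the correct fibre at each point, that the assignment is bilinear (so that it descends to the tensor product $H^0(L_1)\tens H^0(L_2)$), and that $\ell$ is actually a regular section. The first two are immediate from the definitions. By construction the tensor product bundle $L_1\tens L_2$ has fibre $(L_1)_x\tens (L_2)_x$ at each $x\in\Xi$, so the prescription $x\mapsto \ell_1(x)\tens \ell_2(x)$ assigns to $x$ an element of the correct one-dimensional space $(L_1\tens L_2)_x$, defining a set-theoretic section $\ell$. Since tensoring over $\F$ is additive and $\F$-homogeneous in each factor, the assignment $(\ell_1,\ell_2)\mapsto \ell$ is bilinear, and hence factors through a well-defined linear map out of $H^0(L_1)\tens H^0(L_2)$ by the universal property of the tensor product.

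The only point requiring work is regularity of $\ell$, and this I would check locally. Over any open $U\sub\Xi$ on which both $L_1$ and $L_2$ are trivialized, $\ell_1$ and $\ell_2$ are represented by regular functions $f_1,f_2\colon U\to\F$; the induced trivialization of $L_1\tens L_2$ over $U$ then represents $\ell$ by the product $f_1f_2$, which is again regular. To see that these local representatives glue to a global section, I would observe that the transition cocycle of $L_1\tens L_2$ on an overlap is exactly the product of the transition cocycles of $L_1$ and $L_2$; since $f_1$ and $f_2$ each transform by their own cocycle, the product $f_1f_2$ transforms by the product, which is precisely the cocycle for $L_1\tens L_2$. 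Thus the local data are compatible and $\ell\in H^0(L_1\tens L_2)$.

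There is no genuine obstacle here; the whole content is the elementary fact that the tensor product of line bundles multiplies transition functions, so pointwise multiplication of sections preserves regularity and respects gluing. Canonicity of the map is automatic: the fibrewise identification $(L_1\tens L_2)_x\cong (L_1)_x\tens (L_2)_x$ is canonical and no trivialization enters the definition of $\ell$---local trivializations are invoked only to verify regularity, and a different choice changes $f_1$ and $f_2$ by nonvanishing regular factors whose product is absorbed into the corresponding change of trivialization of $L_1\tens L_2$. The argument is identical in the smooth, holomorphic and algebraic settings, since in each case a product of functions of the given type is again of that type.
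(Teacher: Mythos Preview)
Your proof is correct and follows essentially the same approach as the paper: verify that pointwise multiplication yields a regular section of $L_1\tens L_2$, then invoke bilinearity to descend to the tensor product. You simply spell out the regularity check in more detail (via local trivializations and transition cocycles) where the paper asserts it in a single clause.
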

\begin{proof} The map sending $(\ell_1,\ell_2)\in H^0(L_1)\times H^0(L_2)$
to $\ell_1(x)\tens\ell_2(x)$ is regular in $x$, so induces a section of
$L_1\tens L_2$; this is bilinear in $(\ell_1,\ell_2)$, and so induces a map on
the tensor product.
\end{proof}
Similarly, using the canonical isomorphism $(L_1^*\tens L_2)\tens L_1 \cong
L_2$, there is a canonical pointwise multiplication map
\[
H^0(\Xi,L_1^*\tens L_2)\tens H^0(\Xi,L_1)\to H^0(\Xi,L_2)
\]
with transpose
\begin{equation}\label{eq:pmlb}
\Phi\colon H^0(\Xi,L_2)^* \to H^0(\Xi,L_1^*\tens L_2)^*\tens H^0(\Xi,L_1)^*.
\end{equation}

\begin{prop}\label{p:main} Suppose $L_1$ and $L_2$ are very ample line bundles
on $\Xi$.  For $j\in\{1,2\}$, let $W_j=H^0(\Xi,L_j)$, $\phi_j\colon \Xi\to
\Proj(W_j^*)$ the corresponding projective embeddings, $V=H^0(\Xi,L_1^*\tens
L_2)$, and $\Phi\colon W_2^*\to W_1^*\tens V^*$ be given by~\eqref{eq:pmlb}.
Then for any $x\in \Xi$, $\Phi$ maps $\phi_2(x)$ into $\phi_1(x)\tens L(x)$
for some one dimensional subspace $L(x)$ of $V^*$.
\end{prop}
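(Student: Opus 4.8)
The plan is to unwind the definitions of the two projective embeddings $\phi_1,\phi_2$ and the multiplication map $\Phi$, and show that $\Phi$ sends the point $\phi_2(x)$ (a line in $W_2^*$) into the subspace $\phi_1(x)\tens V^*$ of $W_1^*\tens V^*$. Recall that for a very ample $L_j$, the embedding $\phi_j$ sends $x$ to the annihilator of the hyperplane $\{w\in W_j\st w(x)=0\}$; equivalently, $\phi_j(x)\leq W_j^*$ is the line of functionals on $W_j$ given by ``evaluate at $x$,'' namely $w\mapsto w(x)$ taken up to the scaling ambiguity coming from the fibre $L_j(x)$. More precisely, once a trivialization of $L_j(x)$ is chosen, evaluation $\eps^j_x\colon W_j\to L_j(x)$ is a well-defined functional spanning $\phi_j(x)$, and $\phi_j(x)=\langle \eps^j_x\rangle \tens L_j(x)^*$ canonically without choosing the trivialization. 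I would set up this evaluation-functional description first, since it makes the multiplicativity of $\Phi$ transparent.

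The key computation is then to evaluate $\Phi$ on the evaluation functional $\eps^2_x$. By definition, $\Phi$ is the transpose of the pointwise multiplication map $\mu\colon H^0(L_1^*\tens L_2)\tens W_1 \to W_2$, so for $\ell\in V=H^0(L_1^*\tens L_2)$ and $w\in W_1$ we have $\langle \Phi(\eps^2_x),\ell\tens w\rangle = \eps^2_x(\mu(\ell\tens w)) = \mu(\ell\tens w)(x) = \ell(x)\tens w(x)$, using that multiplication is pointwise (Lemma~\ref{l:mult}). Here $\ell(x)\in (L_1^*\tens L_2)(x)=L_1(x)^*\tens L_2(x)$ and $w(x)\in L_1(x)$, so their product lands in $L_2(x)$, matching $\eps^2_x$ landing in $L_2(x)$; this is the bookkeeping that confirms the formula makes sense fibrewise. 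The upshot is that $\Phi(\eps^2_x)$, viewed as a bilinear pairing on $V\times W_1$, factors as $(\ell,w)\mapsto \ell(x)\tens w(x)$, i.e.\ it only sees $w$ through its evaluation $w\mapsto w(x)=\eps^1_x(w)$. Therefore $\Phi(\eps^2_x)$ lies in $\langle\eps^1_x\rangle\tens V^*$ as an element of $W_1^*\tens V^*$, with the remaining $V^*$-factor being the functional $\ell\mapsto \ell(x)$ paired appropriately.

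Finally I would identify the one-dimensional subspace $L(x)\leq V^*$ explicitly: it is the line spanned by the evaluation functional $\eps^{12}_x\colon V=H^0(L_1^*\tens L_2)\to (L_1^*\tens L_2)(x)$, canonically $L(x)=\langle\eps^{12}_x\rangle\tens (L_1^*\tens L_2)(x)^* = L_1(x)\tens L_2(x)^*$, so that the decomposition $\Phi(\phi_2(x))\sub \phi_1(x)\tens L(x)$ is consistent with the fibre identities $L_2(x)^* = L_1(x)^*\tens\bigl(L_1(x)\tens L_2(x)^*\bigr)$. The main obstacle I anticipate is purely notational: keeping the three evaluation functionals and their line-bundle fibres $L_1(x),L_2(x),(L_1^*\tens L_2)(x)$ straight so that every tensor factor lands in the correct one-dimensional fibre, and confirming that $\Phi(\eps^2_x)$ is genuinely nonzero (so that $L(x)$ really is one-dimensional rather than zero). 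Nonvanishing follows because $L_1,L_2$ are basepoint-free: there exist $w\in W_1,\ell\in V$ with $w(x)\neq0$ and $\ell(x)\neq 0$, whence $\langle\Phi(\eps^2_x),\ell\tens w\rangle=\ell(x)\tens w(x)\neq 0$, so $\Phi$ does not kill the line $\phi_2(x)$ and the image is exactly the claimed rank-one subspace $\phi_1(x)\tens L(x)$.
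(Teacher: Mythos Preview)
Your proof is correct and takes essentially the same approach as the paper: both compute $\langle\Phi(\alpha),\ell_1\otimes\ell\rangle=\langle\alpha,\ell_1(x)\ell(x)\rangle$ and deduce the rank-one conclusion from the fact that this vanishes whenever $\ell_1(x)=0$; the paper packages this via a basis of $W_1$ adapted to the hyperplane $\ker\phi_1(x)$, while you phrase it directly using evaluation functionals, which is arguably cleaner. One caveat on your final paragraph: the nonvanishing argument assumes there is $\ell\in V=H^0(L_1^*\tens L_2)$ with $\ell(x)\neq 0$, but basepoint-freeness of $L_1$ and $L_2$ does not imply this for $L_1^*\tens L_2$. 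Fortunately the proposition as stated only asks for containment in $\phi_1(x)\tens L(x)$, which is trivially satisfied if $\Phi(\alpha)=0$ (choose any line $L(x)\leq V^*$), so this step is unnecessary; the paper's proof likewise does not address it.
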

\begin{proof} Let $\ell_1\in W_1$ and $\ell\in V$; then for any $x\in \Xi$
and $\alpha\in \phi_2(x)$, $\ip{\Phi(\alpha), \ell_1\tens \ell}=
\ip{\alpha,\ell_1(x)\ell(x)}$, which vanishes whenever $\ell_1(x)=0$, i.e.,
$\ell_1\in\ker \phi_1(x)$. Let $e_1,\ldots e_{k-1}$ be a basis of
$\ker\phi_1(x)$ and extend by $e_k$ to a basis of $W_2$. Write
$\Phi(\alpha)=\sum_{j=1}^k \eps_j\tens \alpha_j$ where $\eps_1,\ldots \eps_k$
is the dual basis and $\alpha_j\in V^*$; it then follows by evaluating on
$e_j\tens v$ for $j\in \{1,\ldots k-1\}$ and any $v\in V$ that
$\Phi(\alpha)=\eps_k\tens \alpha_k$. But $\eps_k$ vanishes on $\ker\phi_1(x)$
and so $\Phi(\alpha)\in \phi_1(x)\tens\vspn{\alpha_k}$.
\end{proof}

In general, there is no reason to suppose that $\Phi$ here injects; indeed $V$
could even be zero. The injectivity can be viewed as a relative ampleness
condition.

\begin{defn}\label{d:relamp} Let $L_1$ and $L_2$ be line bundles on $\Xi$.
If the canonical map $\Phi$ in~\eqref{eq:pmlb} is injective (equivalently
$\Phi^*$ surjects), $L_2$ is said to be \emph{more ample} than $L_1$.
\end{defn}
\begin{example} The degree $d$ line bundles $\cO_V(d)$ over a projective
space $\Proj(V)$ are very ample for $d\geq 1$ and then $\cO_V(d_1)$ is more
ample than $\cO_V(d_2)$ iff $d_1\geq d_2$.
\end{example}

These ideas apply fibrewise to the bundles of projective varieties $\chv^\qle$
and $\cC^\qle$: the corresponding fibrewise dual tautological line bundles
pull back to line bundles $\Lb_\chv\to\chv^\qle$ and $\Lb_\cC\to \cC^\qle$.

For a line bundle $\Lb$ over a bundle of projective varieties $\cY\to\cod$,
let $H^0(\Lb)\to\cod$ denote the bundle of fibrewise regular sections. Then
there are canonical maps
\begin{equation}\label{eq:canmaps}
\cod\times \t\to H^0(\Lb_\chv) \quad\text{and}\quad T^*\cod\to
H^0(\Lb_\cC)
\end{equation}
given by restricting fibrewise sections of the dual tautological line bundles
over $\Uk\times\Proj(\t^*)$ and $\Proj(T\cod)$ to $\chv^\qle$ and $\cC^\qle$
respectively.

If $\chv^\qle$ and $\cC^\qle$ are not contained (fibrewise) in any hyperplane,
these maps are injective, hence fibrewise linear systems, and surjectivity
means that these linear systems are complete.  In this situation, the
projective embeddings can be recovered from the line bundles, and the relative
ampleness of Definition~\ref{d:relamp} can be used (fibrewise) to compare
them.

\subsection{Compliant quasilinear systems}

The following definition covers a wide class of QLS, yet makes the algebraic
geometry of the characteristic and cocharacteristic projective embeddings as
straightforward as possible, allowing an easy comparison between them.

\begin{defn} A QLS $\qle\leq \t^*\tens T\cod$ is \emph{compliant} if all of
the following conditions hold:
\begin{numlist}
\item the characteristic correspondence maps $\pi_\chv$ and $\pi_\cC$ are
  isomorphisms, with $\zeta^\qle= \pi_\chv\circ\pi_\cC^{-1}\colon
  \cC^\qle\to\chv^\qle$ denoting the induced isomorphism;

\item the canonical maps $\cod\times \t\to H^0(\Lb_\chv)$ and $T^*\cod\to
  H^0(\Lb_\cC)$ of~\eqref{eq:canmaps} are isomorphisms;

\item $\cV^\qle:=H^0(\Lb_\cC\tens(\zeta^\qle)^* \Lb_\chv^*)^*\to\cod$ is a
  vector bundle over $\cC^\qle$, and $\Lb_\cC$ is more ample than
  $(\zeta^\qle)^*\Lb_\chv$, so that the canonical vector bundle map
\[
  \Phi^\qle\colon T\cod\to \t^*\tens \cV^\qle,
\]
defined fibrewise as in~\eqref{eq:pmlb}, using the isomorphisms in (2), is
injective.

\item if $\mathrm{rank}(\cV^\qle)\geq 2$, no $2$-dimensional submanifold
  $\Sig$ of $\cod$ has $\Phi^\qle(T\Sig)\sub \t^*\tens \cV^\qle$
  everywhere decomposable (i.e., with all elements of rank one).
\end{numlist}
\end{defn}
Compliancy may seem rather restrictive due to the number of conditions
involved. However, by Remark~\ref{r:determined}, condition (1) is expected for
determined QLS. A significant part of condition (2) is that the characteristic
variety $\chv^\qle$ does not lie in a projective hyperplane bundle, but this
could be taken as part of what it means for a QLS to be dispersionless (I have
not found a rigorous definition of ``dispersionless'' for arbitrary QLS in the
literature).

The key condition here is (3), giving a tensor product decomposition of
$T\cod$. This fails in Example~\ref{ex:dKP} because the characteristic
embedding is more ample than the cocharacteristic one, rather than
vice-versa. However all of the QLS in Examples~\ref{ex:main} are generically
compliant. Condition (4) implies that the generic tangent space to any
$2$-dimensional submanifold $\Sig$ of $\cod$ contains indecomposable elements
(via the injection $\Phi^\qle$).  This is a technical condition needed to
ensure that the decomposition in (3) is enough to recover the compatiblity of
the hydrodynamic system from the cocharacteristic variety.  \medbreak

The following is an immediate consequence of Proposition~\ref{p:main}
(fibrewise), and allows the recovery of the characteristic embedding from the
cocharacteristic one.

\begin{prop}\label{p:comp} Let $\qle$ be a compliant QLS. Then for
any $[X]\in \cC^\qle$, there exist nonzero $\chm\in \zeta([X])$ and
$v\in\cV^\qle$ such that $\Phi^\qle(X)=\chm\tens v$.
\end{prop}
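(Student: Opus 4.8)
The plan is to reduce the statement to a fibrewise application of Proposition~\ref{p:main}, so that the only real work is to identify the abstract line-bundle data occurring there with the concrete objects attached to a compliant QLS. First I would fix a point $p\in\cod$ and work entirely in the fibre over $p$, taking as the abstract variety $\Xi=\cC^\qle_p$; via the isomorphism $\zeta^\qle_p\colon\cC^\qle_p\to\chv^\qle_p$ supplied by condition~(1), this is simultaneously a model for the characteristic variety. On $\Xi$ I would set $L_2=\Lb_\cC$ and $L_1=(\zeta^\qle)^*\Lb_\chv$, the cocharacteristic and transported characteristic line bundles.

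Next I would verify the hypotheses of Proposition~\ref{p:main}. The cocharacteristic embedding $\phi_2\colon\Xi\into\Proj(T_p\cod)$ is the tautological inclusion, hence an embedding, so $L_2$ is very ample; by condition~(2) its complete linear system is $W_2=H^0(L_2)\cong T_p^*\cod$, recovering $\Proj(W_2^*)=\Proj(T_p\cod)$. Since $\zeta^\qle_p$ is an isomorphism, the characteristic embedding transports to an embedding $\phi_1=\phi_\chv\circ\zeta^\qle_p\colon\Xi\into\Proj(\t^*)$, so $L_1$ is very ample with $W_1=H^0(L_1)\cong\t$ by condition~(2), giving $\Proj(W_1^*)=\Proj(\t^*)$. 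Under these identifications $V=H^0(L_1^*\tens L_2)$ satisfies $V^*=\cV^\qle_p$ by the definition in condition~(3), and the canonical map $\Phi$ of~\eqref{eq:pmlb} is exactly the fibre $\Phi^\qle_p\colon T_p\cod\to\t^*\tens\cV^\qle_p$ of the vector-bundle map in the definition.

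I would then invoke Proposition~\ref{p:main} directly. Given $[X]\in\cC^\qle_p$, let $x\in\Xi$ be the corresponding point, so $\phi_2(x)=[X]$. The conclusion is that $\Phi^\qle$ maps the line $[X]$ into $\phi_1(x)\tens L(x)$, where $\phi_1(x)$ is the one-dimensional characteristic line $\zeta([X])\leq\t^*$ and $L(x)\leq\cV^\qle_p$ is one-dimensional. Hence $\Phi^\qle(X)=\chm\tens v$ for some $\chm\in\zeta([X])$ and $v\in L(x)\sub\cV^\qle_p$. Finally, since $X\neq0$ and $\Phi^\qle$ is injective by condition~(3), $\Phi^\qle(X)\neq0$, which forces both $\chm$ and $v$ to be nonzero, as required.

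I expect no substantive obstacle: the mathematical content is entirely carried by Proposition~\ref{p:main}, and conditions~(1)--(3) of compliancy are engineered precisely so that its hypotheses hold fibrewise. The only point needing mild care is checking that the map $\Phi$ built abstractly in~\eqref{eq:pmlb} from $L_1,L_2$ coincides, under the isomorphisms of condition~(2), with the bundle map $\Phi^\qle$ of the definition; this holds by construction but is worth stating explicitly. Note that condition~(4) is not used here, being reserved for the later recovery of the compatibility of the hydrodynamic system.
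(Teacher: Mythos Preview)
Your proposal is correct and follows exactly the approach the paper takes: the paper states only that the proposition is an immediate consequence of Proposition~\ref{p:main} applied fibrewise, and you have spelled out precisely this reduction, including the identifications needed via compliancy conditions (1)--(3) and the observation that injectivity of $\Phi^\qle$ forces $\chm$ and $v$ to be nonzero.
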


\section{Differential geometry: cocharacteristic nets}\label{sec:cc-net}

\subsection{Nets and conjugate nets}

\begin{defn} Let $\ned$ be an $N$-manifold and $\cA=\{1,\ldots N\}$.
\begin{bulletlist}
\item A \emph{pre-net} on $\ned$ is a direct sum decomposition
  $T\ned=\Dsum_{a\in\cA} \cD_a$ into rank one distributions $\cD_a\leq T\ned$ for
  $a\in\cA$; thus each $\cD_a$ is tangent to a foliation of $\ned$ with one
  dimensional leaves.
\item A pre-net $\cD_a:a\in\cA$ on $\ned$ is \emph{integrable} if for every
  subset $\cB\sub\cA$, $\cD_\cB:=\Dsum_{b\in\cB} \cD_b$ is an integrable
  distribution (i.e., tangent to a foliation with $\#\cB$ dimensional leaves);
  an integrable pre-net is called a \emph{net}.
\end{bulletlist}
\end{defn}
Recall that by the Frobenius Theorem (see e.g.~\cite{Lee}), a constant rank
distribution is tangent to a foliation if and only if its sheaf of smooth
sections is closed under Lie bracket.

\begin{prop} For a pre-net $\cD_a:a\in\cA$ on $\ned$, the following are equivalent.
\begin{numlist}
\item $\cD_a:a\in\cA$ is integrable, i.e., a net.
\item For every $\cB\sub\cA$ with $\#\cB=3$, $\cD_{\cB}$ is an integrable
  distribution, and if $N=3$, the same is true when $\#\cB=2$.
\item For every $\cB\sub\cA$ with $\#\cB=2$, $\cD_{\cB}$ is an integrable
  distribution.
\item For each $a\in\cA$, $\cD_{\cA\setminus\{a\}}$ is an integrable
  distribution.
\item Near any $p\in \ned$, there are local coordinates $r_a:a\in\cA$ such that
  $\cD_a=\mathrm{span}\{\del_{r_a}\}$.
\end{numlist}
\end{prop}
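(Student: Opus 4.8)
The plan is to reduce the integrability of every sub-distribution to a single uniform condition on Lie brackets, and then to carry out a simultaneous straightening. To begin I would work near a fixed $p\in\ned$ and choose a local frame $X_a\in\Gamma(\cD_a)$ spanning $\cD_a$ for each $a\in\cA$; since $T\ned=\Dsum_{a\in\cA}\cD_a$ the $X_a$ form a frame, so I can introduce structure functions $c_{ab}^c$ by $[X_a,X_b]=\sum_c c_{ab}^c X_c$. For any $\cB\sub\cA$ the fields $\{X_b:b\in\cB\}$ frame $\cD_\cB$, so by the Frobenius criterion recalled above, $\cD_\cB$ is integrable precisely when $[X_a,X_b]\in\Gamma(\cD_\cB)$ for all $a,b\in\cB$, i.e.\ when $c_{ab}^c=0$ whenever $a,b\in\cB$ and $c\notin\cB$.

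Let $(\ast)$ be the condition that $c_{ab}^c=0$ for all pairwise distinct $a,b,c$. I would then verify that each of (1)--(4) is equivalent to $(\ast)$, which settles all implications among them at once. Indeed (3) is literally $(\ast)$, since $\{a,b\}$-integrability is $c_{ab}^c=0$ for $c\notin\{a,b\}$; every constraint appearing in (1) has $a,b\in\cB$, $c\notin\cB$ with $a\neq b$, hence pairwise distinct indices, so (1) is a special case of and therefore equivalent to $(\ast)$; and for (4) the only index outside $\cA\setminus\{a\}$ is $a$, so its integrability reads $c_{bb'}^a=0$ for $b,b'\neq a$, again $(\ast)$. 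For (2) with $N\geq4$, any distinct triple $a,b,c$ completes to a three-element set $\{a,b,d\}$ with $d\neq a,b,c$, whose integrability forces $c_{ab}^c=0$; when $N=3$ the three-element clause is vacuous (it only asserts that $\cD_\cA=T\ned$ is integrable), which is exactly why the statement appends the two-element requirement, making (2) coincide with (3). For $N=2$ there are no distinct triples, so $(\ast)$ and all of (1)--(4) hold trivially.

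The implication (5)$\Rightarrow$(1) is immediate: if $\cD_a=\mathrm{span}\{\del_{r_a}\}$ then each $\cD_\cB$ is spanned by commuting coordinate fields. The substantive step, which I expect to be the main obstacle, is (4)$\Rightarrow$(5), the simultaneous straightening of the net. Here I would use that each $\cD_{\cA\setminus\{a\}}$ is integrable of corank one, so by Frobenius there is locally a submersion $r_a$ with $\ker\d r_a=\cD_{\cA\setminus\{a\}}$; in particular the nonvanishing $1$-form $\d r_a$ annihilates $\cD_b$ for every $b\neq a$. It then remains to check that $(r_1,\dots,r_N)$ is a chart adapted to the pre-net. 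Evaluating on the frame gives $\d r_a(X_b)=0$ for $b\neq a$, while $\d r_a(X_a)\neq0$, for otherwise $\d r_a$ would vanish on all of $T\ned=\cD_a\dsum\cD_{\cA\setminus\{a\}}$; thus the matrix $\bigl(\d r_a(X_b)\bigr)_{a,b}$ is diagonal and invertible at $p$, so $\d r_1,\dots,\d r_N$ are independent and the $r_a$ are coordinates near $p$. Finally, for $b\neq a$ one has $\cD_a\sub\cD_{\cA\setminus\{b\}}=\ker\d r_b$, whence $\cD_a\sub\bigcap_{b\neq a}\ker\d r_b$; the right-hand side is the intersection of $N-1$ independent hyperplane fields, hence a rank one distribution that also contains $\del_{r_a}$, forcing $\cD_a=\mathrm{span}\{\del_{r_a}\}$ and completing (5). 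I anticipate that the only delicate points are the corank-one bookkeeping needed to apply Frobenius and to conclude that the $r_a$ are genuinely independent, together with the low-$N$ degeneracies already noted.
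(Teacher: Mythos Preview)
Your proof is correct and uses the same ingredients as the paper: the Frobenius criterion on brackets, the extra-element trick for $N\geq 4$ in handling (2), and Frobenius on the corank-one distributions $\cD_{\cA\setminus\{a\}}$ to manufacture the coordinates $r_a$ in (4)$\Rightarrow$(5). The only difference is organizational: the paper argues via the cycle (1)$\Rightarrow$(2)$\Rightarrow$(3)$\Rightarrow$(4)$\Rightarrow$(5)$\Rightarrow$(1) without introducing structure functions, whereas you reduce (1)--(4) simultaneously to the single condition $(\ast)$ on the $c_{ab}^c$; your star-shaped reduction makes the equivalence of (1)--(4) more transparent, while the paper's cycle is terser (its (4)$\Rightarrow$(5) is a one-line appeal to leaf-space coordinates where you give the full bookkeeping).
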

\begin{proof} It is trivial that (1) $\Rightarrow$ (2).

(2) $\Rightarrow$ (3). This is vacuous for $N\leq 2$ and built into the
  statement for $N=3$. For $N\geq 4$ and $\#\cB=2$, there exist distinct
  $b,c\in \cA\setminus\cB$, so that $[\cD_{\cB},\cD_{\cB}]\leq
  \cD_{\cB\cup\{b\}}\cap \cD_{\cB\cup\{c\}}=\cD_\cB$.

(3) $\Rightarrow$ (4). For any $b,c\in\cA\setminus\{a\}$ and any local
  sections $X_b$ and $X_c$ of $\cD_b$ and $\cD_c$ respectively, $[X_b,X_c]$ is
  a section of $\cD_b\oplus\cD_c\leq\cD_{\cA\setminus\{a\}}$, and hence
  $\cD_{\cA\setminus\{a\}}$ is integrable.

(4) $\Rightarrow$ (5). The $r_a$ are pullbacks of local coordinates on the
  one dimensional local leaf spaces of $\cD_{\cA\setminus\{a\}}$.

(5) $\Rightarrow$ (1). For $\cB\sub\cA$, $\cD_{\cB}$ is the joint kernel of
  $\d r_a:a\in \cA\setminus\cB$, hence integrable.
\end{proof}

\begin{example} Consider the hydrodynamic system $\bR^*A\wedge(\d\bR/\d\bt)=0$
where $A$ is a section of $\t^*\tens\mathrm{End}(T\cov)$ over $\cov$, as in
Example~\ref{ex:hs}.  If $A$ is everywhere simultaneously diagonalizable with
distinct eigenfunctions, then the rank one eigendistributions define a pre-net
on $\cov$. The condition that these eigendistributions are tangent to
coordinate lines (leading to the QLS $\qle$ spanned by
$\chm_a\tens\del_{r_a}:a\in\cA$) is precisely the condition that this pre-net
is a net.
\end{example}

\subsection{Conjugate and cocharacteristic nets}

For application to hydrodynamic integrability, a special class of nets will be
needed. Suppose that $\cD_a:a\in\cA$ is a pre-net on $\ned$, and that $T\ned\leq
\t^*\tens \cV$ for a vector space $\t^*$ and a line bundle $\cV\to \ned$; then
each $\cD_a$ defines a line subbundle $M_a$ of $\ned\times\t^*$ with $\cD_a=M_a
\tens\cV$.

One may then require, for all $a,b\in\cA$ and for any sections $X_b$ of
$\cD_b$ and $\sigma_a$ of $M_a$, that $\d_{X_b}\sigma_a$ is a section of
$M_a\oplus M_b\leq \ned\times\t^*$. If this holds then for all $a,b\in\cA$,
$[X_a,X_b]$ is a section of $\cD_a\oplus\cD_b$ and $\cD_a:a\in\cA$ is
therefore a net, which will be called a \emph{conjugate net}.

\begin{example} The terminology here comes from the classical situation (see
e.g.~\cite{AG}) that $\ned$ is an affine space with translations $\t^*$ (or
indeed a flat affine manifold). Then the tangent bundle of $\ned$ is
isomorphic to $\ned\times \t^*$ and a net $\cD_a:a\in\cA$ is conjugate if for
all $a,b\in\cA$, the coordinate lines along the any surface tangent to
$\cD_a\oplus\cD_b$ are conjugate, i.e., orthogonal with respect to the second
fundamental form of the surface.
\end{example}

\begin{defn}\label{d:cnet} Let $\qle\leq\t^*\tens T\cod$ be a compliant QLS
with $T\cod\leq \t^*\tens\cV^\qle$ and cocharacteristic variety $\cC^\qle\leq
\Proj(T\cod)$.  An $N$-dimensional \emph{cocharacteristic net} in $\cod$ is a
parametrized submanifold $\Uk\colon\cov\to \ned\sub \cod$, with $\cov$ open in
$\R^N$, such that:
\begin{numlist}
\item\label{i:cnet} the net on $\ned$ spanned by $\del_a
  \Uk:a\in\cA=\{1,\ldots N\}$ satisfies $[\del_a \Uk]\in\cC^\qle$; and
\item\label{i:cnet2} if $\cV^\qle$ has rank one, the net is conjugate.
\end{numlist}
\end{defn}

\section{Hydrodynamic integrability}\label{sec:hi}

\subsection{Integrable hydrodynamic systems}\label{sec:ihs}

As a warm-up for the proof of the main theorem, and to motivate the definition
of hydrodynamic integrability, consider again Example~\ref{ex:hs} in the
diagonalizable case, where the QLS $\qle$ is spanned by
$\chm_a\tens\del_{r_a}:a\in\cA$. Since the system is equivalently the EDS
generated by the $2$-forms $\theta_a\wedge\d r_a$, where
$\theta_a=\ip{\chm_a,\d \bt}$, it is compatible if and only if these $2$-forms
algebraically generate a differential ideal, i.e., for all $a\in\cA$,
$\d\theta_a\wedge\d r_a=0$ mod $(\theta_b\wedge\d r_b)_{b\in\cA}$. This holds
if and only if \[ \text{for all $a\in\cA$, $\d\chm_a\wedge\d r_a=0$ mod
  $(\chm_b\,\d r_b)_{b\in\cA}$ (as $\t^*$-valued $2$-forms on $\cov$),}
\]
i.e., for all $a\in \cA$, there are (scalar-valued) $1$-forms
$\beta_a=\sum_{b\in \cA} \beta_{ab} \d r_b$ and functions $\gamma_{ab}$ on
$\cov$ such that
\begin{equation}\label{eq:scalar}\begin{split}
\d\chm_a\wedge\d r_a&=\beta_a\wedge(\chm_a\,\d r_a)
+{\textstyle\sum_{b\in\cA}(\chm_b\,\d r_b)\wedge(\gamma_{ab}\,\d r_a)}\\
\text{i.e.,}\quad \del_b \chm_a &= \beta_{ab}\chm_a+ \gamma_{ab}\chm_b
\quad\text{for all}\quad b\neq a.
\end{split}
\end{equation}
Fixing $\beta_a$ and $\gamma_{ab}$, $\chm_a:a\in\cA$ may be viewed as a
$\t^*$-valued solution to a linear system on $\cov$. This linear system in
turn has a compatibility condition: differentiating once more,
\begin{equation*}
\begin{split}
0 &= \d\beta_a\wedge(\chm_a\,\d r_a)-\beta_a\wedge \d\chm_a\wedge\d r_a
+{\textstyle\sum_{b\in\cA}\bigl( \gamma_{ab}\,\d\chm_b+\chm_b\,\d\gamma_{ab}
\bigr)\wedge\d r_b\wedge\d r_a}\\
&=\chm_a\,\d\beta_a\wedge\d r_a+{\textstyle\sum_{b\in\cA}\chm_b\,\bigl(
\gamma_{ab}(-\beta_a+\beta_b)
+\d\gamma_{ab}\bigr)\wedge\d r_b\wedge\d r_a}\\
&\qquad+{\textstyle\sum_{b,c\in\cA}
\chm_c\,\gamma_{ab}\gamma_{bc}\,\d r_c\wedge\d r_b\wedge\d r_a}\\
&=\chm_a\,\d\beta_a\wedge\d r_a+{\textstyle\sum_{c\in\cA}\chm_c\,\bigl(
\gamma_{ac}(\beta_c-\beta_a)+\d\gamma_{ac}
-\sum_{b\in\cA}\gamma_{ab}\gamma_{bc}\d r_b\bigr)\wedge \d r_c\wedge\d r_a}
\end{split}\end{equation*}
for all $a\in \cA$. This is satisfied for all $\chm_a:a\in\cA$ if and only
if for all $a,c\in \cA$ with $c\neq a$
\begin{equation}\label{eq:hyd-int}
\begin{split}
\d\beta_a\wedge\d r_a=\d(\beta_a\wedge\d r_a) =\d(r_a\d\beta_a)&=0\\
{\textstyle\sum_{b\in\cA}\bigl(\del_b\gamma_{ac}+
\gamma_{ac}\beta_{cb}-\gamma_{ac}\beta_{ab}-\gamma_{ab}\gamma_{bc}
\bigr)\d r_b\wedge \d r_c\wedge \d r_a}&=0.
\end{split}
\end{equation}
If~\eqref{eq:hyd-int} holds, and $(\chm_a)_{a\in\cA}$ is a $\t^*$-valued
solution to~\eqref{eq:scalar} then $\theta_a\wedge\d r_a:a\in\cA$ generate a
differential ideal, and $\qle$ may be called an \emph{integrable} hydrodynamic
system.  Such systems were introduced by Tsarev~\cite{Tsarev} as
\emph{semi-hamiltonian systems of hydrodynamic type}. Such systems are
regarded as integrable because of the fundamental observation of
Tsarev~\cite{Tsarev} that they admit \emph{generalized hodograph} solutions.
Interpreted in the affine geometry of Definition~\ref{d:aff} and
Remark~\ref{r:aff} (using in particular the space of affine functions
$\h\to\t^*$ and the tautological affine coordinate $\bt\colon\as\to \h^*$),
his observation is as follows.

\begin{prop} Let $\qle$ be a hydrodynamic system with characteristic
  momenta $\chm_a:{a\in\cA}$. Then for any lift $\tilde\chm_a:{a\in\cA}$ of
  $\chm_a:{a\in\cA}$ to an $\h$-valued solution of~\eqref{eq:scalar}, any
  function $\bR\colon \as\to\cov$ solving the implicit equations
\begin{equation}\label{eq:impl}
\ip{\tilde\chm_a(\bR),\bt} = 0
\end{equation}
for all $a\in \cA$, is a solution of $\qle$.
\end{prop}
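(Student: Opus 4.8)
The plan is to differentiate the implicit equations~\eqref{eq:impl} along a purported solution $\bR$ and to read off from the resulting identity that $\d R^a/\d\bt$ is a multiple of $\chm_a(\bR)$ for each $a\in\cA$. Writing $\bR=(R^1,\ldots R^N)$ in the Riemann invariants and using $\d\bR/\d\bt=\sum_{a\in\cA}(\d R^a/\d\bt)\tens\del_{r_a}$, membership of $\d\bR/\d\bt$ in $\bR^*\qle=\mathrm{span}\{\chm_a(\bR)\tens\del_{r_a}\}$ is exactly the statement that each coefficient $\d R^a/\d\bt\in\t^*$ lies in $\mathrm{span}(\chm_a(\bR))$, equivalently that $\d R^a$ is proportional to $\theta_a(\bR)=\ip{\chm_a(\bR),\d\bt}$. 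So it suffices to establish these $N$ proportionalities.

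First I would set $F_a:=\ip{\tilde\chm_a\circ\bR,\bt}\in\Om^0_\as$. Since $\bR$ solves~\eqref{eq:impl}, every $F_a$ vanishes identically, and hence $\d F_a=0$. Differentiating by the Leibniz rule for the pairing $\h\times\h^*\to\R$ produces two contributions. The term from differentiating $\bt$ is $\ip{\tilde\chm_a\circ\bR,\d\bt}$; because $\d\bt$ takes values in $\t\sub\h^*$, annihilating constants (Remark~\ref{r:aff}), this pairing depends only on the affine-derivative class of $\tilde\chm_a$, which is $\chm_a$ by hypothesis, so this term equals exactly $\theta_a(\bR)$. The term from differentiating $\tilde\chm_a\circ\bR$ is $\sum_{b\in\cA}\ip{(\del_b\tilde\chm_a)(\bR),\bt}\,\d R^b$.

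The crux is to eliminate the off-diagonal ($b\neq a$) part of this last sum. Here I would invoke the hypothesis that $\tilde\chm_a$ is an $\h$-valued solution of~\eqref{eq:scalar}, so that $\del_b\tilde\chm_a=\beta_{ab}\tilde\chm_a+\gamma_{ab}\tilde\chm_b$ for $b\neq a$; pairing with $\bt$ gives $\ip{(\del_b\tilde\chm_a)(\bR),\bt}=\beta_{ab}(\bR)\,F_a+\gamma_{ab}(\bR)\,F_b$, which vanishes because all the $F_c$ vanish at once. It is precisely here that the full system~\eqref{eq:impl}, and not merely the single equation indexed by $a$, is needed. What remains is $\d F_a=\ip{(\del_a\tilde\chm_a)(\bR),\bt}\,\d R^a+\theta_a(\bR)$, and setting this to zero and stripping $\d\bt$ (using that $\psi\mapsto\ip{\psi,\d\bt}$ is injective on $\t^*$-valued functions via $T^*\as\cong\as\times\t^*$) yields $\chm_a(\bR)=-\ip{(\del_a\tilde\chm_a)(\bR),\bt}\,(\d R^a/\d\bt)$, exhibiting $\d R^a/\d\bt$ as a scalar multiple of $\chm_a(\bR)$.

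I expect the main difficulty to be the bookkeeping of the affine-geometric pairings rather than anything deep: one must keep track of the distinction between $\h$ and $\t^*$ and check that differentiating $\bt$ reproduces $\theta_a$ exactly, which hinges on the lift being $\h$-valued and solving the same linear system~\eqref{eq:scalar} as $\chm_a$. A minor point to record is nondegeneracy: the displayed identity gives $\d R^a/\d\bt\in\mathrm{span}(\chm_a(\bR))$ cleanly wherever $\chm_a(\bR)\neq0$, and since the characteristic momenta are generically nonzero (indeed pairwise independent in the diagonalizable setting of Example~\ref{ex:hs}), the diagonal coefficient $\ip{(\del_a\tilde\chm_a)(\bR),\bt}$ is then automatically nonzero and the proportionality nondegenerate.
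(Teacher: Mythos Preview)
Your proof is correct and follows essentially the same route as the paper: differentiate~\eqref{eq:impl}, then use that $\tilde\chm_a$ solves~\eqref{eq:scalar} together with the simultaneous vanishing of all the $F_c$ to kill the off-diagonal contributions. The only cosmetic difference is that the paper wedges the differentiated identity with $\d R^a$ to obtain $\bR^*\theta_a\wedge\d R^a=0$ directly (the EDS form of $\qle$), which absorbs the diagonal term via $\d R^a\wedge\d R^a=0$ and so sidesteps your nondegeneracy discussion entirely.
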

\begin{proof} The exterior derivative of~\eqref{eq:impl} is
$\ip{\bR^*\d\tilde\chm_a,\bt} + \bR^*\theta_a= 0$, since $\d\bt$ takes values in
  $\t$. Hence, writing $R^a=r_a\circ \bR$ for the components of $\bR$, it
  follows from~\eqref{eq:scalar} and~\eqref{eq:impl} that
\begin{align*}
-R^*\theta_a\wedge\d R^a&= \ip{R^*\d\tilde\chm_a,\bt}\wedge \d R^a\\
&=\bigl(\ip{\tilde\chm_a(\bR),\bt}\beta_a + {\textstyle\sum_{b\in\cA}
R^*\gamma_{ab}\ip{\tilde\chm_b(\bR),\bt}\d R^b}\bigr)\wedge \d R^a=0.
\end{align*}
for all $a\in \cA$.
\end{proof}
In the integrable case, the linear system~\eqref{eq:scalar} is compatible, so
there are many such solutions. Tsarev originally presented these systems in
particularly convenient gauge: under scaling equivalence $\chm_a\mapsto f_a
\chm_a$, ~\eqref{eq:scalar} is modified by adding $f_a^{-1}\d f_a$ to
$\beta_a$ and rescaling $\gam_{ab}$.  Tsarev used this freedom to ensure that
constants are scalar solutions of~\eqref{eq:scalar}.  This forces
$\beta_a=-\sum_b\gamma_{ab}\d r_b$ mod $\d r_a$ and~\eqref{eq:scalar} becomes
\[
\d\chm_a = {\textstyle\sum_{b\in\cA} \gamma_{ab}(\chm_b-\chm_a) \d r_b\mod \d r_a},
\quad\text{i.e.,}\quad \del_b \chm_a = \gamma_{ab}(\chm_b-\chm_a)
\quad\text{for}\quad b\neq a.
\]
Since $\d\beta_a\wedge\d r_a=\sum_{b,c}(\del_c\gamma_{ab})\d r_a\wedge \d
r_b\wedge \d r_c$, the integrability condition~\eqref{eq:hyd-int} becomes
$\del_c\gamma_{ab}=\del_b\gamma_{ac}$ and $\del_b\gamma_{ac}+
\gamma_{ac}\gamma_{cb}-\gamma_{ac}\gamma_{ab}-\gamma_{ab}\gamma_{bc}$ for
$a,b,c$ distinct.

\subsection{Hydrodynamic reductions}\label{sec:hr} One remaining task before
proving the main theorem is to formalize the notion of a hydrodynamic
reduction of a QLS $\qle\leq \t^*\tens T\cod$ on maps $\uk\colon \as\to\cod$
(for a flat affine manifold $\as$). Recall from the introduction that the data
defining an $N$-component hydrodynamic reduction are a map $U\colon\cov\to
\cod$ and maps $\chm_a\colon \cov\to \t^*$ for $a\in\cA=\{1,\ldots N\}$ and
$\cov$ open in $\R^N$ with coordinates $r_1,\ldots r_N\colon \cov\to
\R$. These data are required to satisfy two properties:
\begin{numlist}
\item the hydrodynamic system on maps $\bR\colon \as\to \cov$ defined by
  $\d R^a\wedge \ip{\chm_a(\bR),\d\bt}=0$ (for all $a\in\cA$) is compatible
  (where $R^a=r_a\circ \bR$ are the components of $\bR$);
\item if $\bR$ solves this system, then $\uk=U\circ\bR$ solves $\qle$.
\end{numlist}

Note that the characteristic momenta $\chm_a:a\in\cA$ are only naturally
defined up to scale, and the hydrodynamic system may be rephrased that for all
$a\in \cA$ there are scalar valued functions $f_a(r_1,\ldots r_n)$ (depending
on the solution) such that $\d R^a/\d\bt = f_a(\bR) \chm_a(\bR)$. As in the
introduction, the chain rule for $\uk=\Uk\circ \bR$ implies
\begin{align*}
\frac{\d \uk}{\d\bt} &= \frac{\bR^* \d \Uk\circ \d\bR}{\d\bt}
= \sum_{a\in\cA} \frac{\d R^a}{\d\bt} \tens \del_a \Uk(\bR)
= \sum_{a\in\cA} f_a(\bR) \chm_a(\bR)\tens\del_a\Uk(\bR).
\end{align*}
Under the compatibility of the hydrodynamic system (so that it has many
independent solutions $\bR$), criterion (2) above is therefore equivalent to
the property that $\beta_a:=\chm_a\tens \del_a \Uk$ is in $\qle$ for all
$a\in\cA$. But then, where $\beta_a$ is nonzero, $[\beta_a]$ is in the rank
one variety $\cR^\qle$, i.e., $[\chm_a]\in\chv^\qle$ is characteristic and
$[\del_a \Uk]\in\cC^\qle$ is cocharacteristic. Geometrically, the hydrodynamic
reduction provides many \emph{$N$-secant solutions} $\uk$ to $\qle$, i.e., the
projective image of $\d\uk$ meets the rank one variety $\cR^\qle$ in $N$ points.

\begin{defn} An \emph{$N$-component hydrodynamic reduction} of a QLS
$\qle\leq\t^*\tens T\cod$ with characteristic variety $\chv^\qle$ is a map
\[
(\Uk,[\chm_1],\ldots[\chm_N])\colon
\cov\to \chv^\qle\times_\cod\cdots \times_\cod \chv^\qle,
\]
where $\cov$ is open in $\R^N$ and the codomain is the $N$-fold fibre product,
such that $\chm_a\tens \del_a \Uk$ is in $\qle$ for all $a\in\{1,\ldots N\}$
and the hydrodynamic system with characteristic momenta $\chm_1,\ldots \chm_N$
is compatible as described in~\eqref{eq:scalar}.

Then~\cite{FKh3,FKhn} $\qle$ is \emph{integrable by hydrodynamic reductions} if
for all $N\geq 2$ it admits $N$-component hydrodynamic reductions
parameterized by $N(n-2)$ functions of $1$-variable.
\end{defn}

For $N\leq m=\dim\cod$, a hydrodynamic reduction generically and locally
determines an $N$-dimensional submanifold $\ned$ of $\cod$ (the image of $\Uk$)
together with a net $\del_a \Uk:a\in\cA$ on $\ned$ with $[\del_a
  \Uk]\in\cC^\qle$. Thus a hydrodynamic reduction defines a net satisfying
Definition~\ref{d:cnet}~\ref{i:cnet}. Conversely, if $\qle$ is a compliant
QLS, then for any such net, Proposition~\ref{p:comp} shows that the embedding
of $\cC^\qle$ into $\Proj(\t^*\tens\cV^\qle)$ gives $\del_a
\Uk=\chm_a\tens v_a$ for some local sections $v_a$ of $\Uk^*\cV^\qle$, where
$[\chm_a]$ are the characteristic momenta corresponding to $[\del_a\Uk]$ under
the isomorphism $\zeta^\qle\colon \cC^\qle\to \chv^\qle$.

\begin{proof}[Proof of Theorem~\ref{t:main}] The discussion so far has
estabilished a correpondence between hydrodynamic reductions, modulo the
compatibility condition, and nets satisfying~\ref{d:cnet}~\ref{i:cnet}. It
therefore remains to show that under this correspondence, the compatibility of
the hydrodynamic system is equivalent to~\ref{d:cnet}~\ref{i:cnet2}, i.e., is
automatic if $\mathrm{rank}\,\cV^\qle\geq 2$ and is equivalent to the net
being conjugate otherwise.

The proof of this last step follows the line of argument in~\cite{DFKN1},
cf.~also~\cite{BFT} (QLS of type I) and \cite{FHK} (QLS of type H). First
choose a basis $\eps_1,\ldots \eps_n$ for $\t^*$ and rescale the
characteristic momenta such that $\chm_{a1}=1$.  Under the embedding of
$\cC^\qle$ into $\Proj(\t^*\tens\cV^\qle)$, $\del_b \Uk=\chm_b\tens v_b$ for
some local sections $v_b$ of $\cV^\qle$ over $\ned$, and hence, in $\t^*$
components, $\del_b \Uk_k = \chm_{bk} v_b= \chm_{bk} \del_b \Uk_1$ for
$k\in\{1,\ldots n\}$. Taking the $\del_a$ derivative of this equation and
commuting partial derivatives yields
\begin{equation*}
(\del_a\chm_{bk}) \del_b \Uk_1-(\del_b\chm_{ak}) \del_a \Uk_1
= (\chm_{ak}-\chm_{bk}) \del_a\del_b \Uk_1.
\end{equation*}
On dividing by $\chm_{ak}-\chm_{bk}$, the right hand side is independent of $k$
and hence
\begin{equation*}
\Bigl(\frac{\del_a\chm_{bk}}{\chm_{ak}-\chm_{bk}}
-\frac{\del_a\chm_{b\ell}}{\chm_{a\ell}-\chm_{b\ell}}\Bigr) v_b
= \Bigl(\frac{\del_b\chm_{ak}}{\chm_{ak}-\chm_{bk}}
-\frac{\del_b\chm_{a\ell}}{\chm_{a\ell}-\chm_{b\ell}}\Bigr) v_a.
\end{equation*}
Thus both sides are zero unless $v_a$ and $v_b$ are linearly dependent, i.e.,
multiples of some $v\in \cV^\qle$, say.  But then the span of $\del_a \Uk =
\chm_a\tens v_a$ and $\del_b \Uk = \chm_b\tens v_b$ is
$\mathrm{span}\{\chm_a,\chm_b\}\tens\mathrm{span}\{v\}$, hence decomposable.
For $\mathrm{rank}(\cV^\qle)\geq 2$, the set where this holds has empty
interior by condition (4) of compliancy, and so the hydrodynamic compatibility
criterion is satisfied on the dense complement, hence everywhere by
continuity.

It remains to establish the equivalence in the case
$\mathrm{rank}(\cV^\qle)=1$.

If the compatibility condition
$\del_a\chm_{bk}=\gam_{ba}(\chm_{ak}-\chm_{bk})$ for $a\neq b$ holds, then
\begin{align*}
\del_a\del_b \Uk_k &= (\del_a\chm_{bk}) \del_b \Uk_1+
\chm_{bk}\,\del_a\del_b \Uk_1\\
&=\gam_{ba}(\chm_{ak}-\chm_{bk})\del_b \Uk_1
+ \chm_{bk} (\gam_{ab}\del_a \Uk_1+\gam_{ba}\del_b \Uk_1)\\
&=\gam_{ab}(v_a/v_b) \del_b \Uk_k +\gam_{ba} (v_b/v_a) \del_a \Uk_k.
\end{align*}
Thus $\del_a\del_b \Uk$ is in the span of and $\del_a\Uk$ and $\del_b \Uk$, so
the net is conjugate.

Conversely, if the net is conjugate with $\del_a\del_b \Uk_k=
\alpha_{ab} \del_b \Uk_k +\beta_{ab}\del_a \Uk_k$ for $a\neq b$, then
taking $k=1$,
\[
\del_a\del_b \Uk_1 = \alpha_{ab} \del_b \Uk_1 +\beta_{ab}\del_a \Uk_1
= \alpha_{ab} v_b +\beta_{ab} v_a.
\]
On the other hand, the $\del_a$ derivative of $\del_b \Uk_k = \chm_{bk} \del_b
\Uk_1$ yields
\[
  \chm_{bk}\,\del_a\del_b \Uk_1 = \del_a\del_b \Uk_k -
  (\del_a\chm_{bk})\del_b \Uk_1
  = \alpha_{ab}  \chm_{bk} v_b +\beta_{ab} \chm_{ak} v_a - (\del_a\chm_{bk}) v_b.
\]
Eliminating $\del_a\del_b \Uk_1$ between these equations, it follows that
\begin{equation*}
  \alpha_{ab}\chm_{bk} v_b +\beta_{ab} \chm_{bk} v_a
  = \alpha_{ab}  \chm_{bk} v_b +\beta_{ab} \chm_{ak} v_a - (\del_a\chm_{bk}) v_b
\end{equation*}
and hence $\del_a\chm_{bk} = \beta_{ab}(v_a/v_b) ( \chm_{ak} - \chm_{bk} )$,
which is the compatibility condition.
\end{proof}

\begin{cor} A compliant QLS $\qle\leq\t^*\tens T\cod$ is integrable by
  hydrodynamic reductions if and only if $\cod$ admits a family of
  $3$-dimensional cocharacteristic nets parametrized by $3(n-2)$ functions of
  one variable.
\end{cor}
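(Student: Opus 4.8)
The plan is to derive the corollary directly from Theorem~\ref{t:main} by specializing the bijection to $N=3$ and invoking the reduction principle of Ferapontov and Khusnutdinova. First I would recall the definition: $\qle$ is integrable by hydrodynamic reductions if for every $N\geq 2$ it admits $N$-component hydrodynamic reductions parametrized by $N(n-2)$ functions of one variable. By~\cite{FKh3,FKhn}, this holds as soon as it holds for $N=3$; that is, $\qle$ is integrable iff it admits $3$-component hydrodynamic reductions parametrized by $3(n-2)$ functions of one variable. It therefore suffices to transport this $N=3$ statement across the correspondence of Theorem~\ref{t:main}.

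Since the substantive range is $3\leq\dim\cod$ (which is forced by the existence of $3$-dimensional nets, and holds for determined compliant systems where $m\geq n\geq 3$), Theorem~\ref{t:main} with $N=3$ supplies a bijection, up to the natural scaling and reparametrization equivalences, between generic $3$-component hydrodynamic reductions of $\qle$ and generic $3$-dimensional cocharacteristic nets in $\cod$. Being a bijection of equivalence classes, it carries a family of reductions to a family of nets of the same size; granted that this identification respects the counting of arbitrary functions, the existence of a family of $3$-component reductions parametrized by $3(n-2)$ functions of one variable is then equivalent to the existence of the corresponding family of $3$-dimensional cocharacteristic nets with the same parametrization, which is precisely the right-hand side of the corollary.

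The step requiring care, and the one I expect to be the main obstacle, is the invariance of the functional-parameter count $3(n-2)$ under the correspondence. Here I would argue that the natural equivalences appearing in Theorem~\ref{t:main}—scaling of each $\chm_a$ by a function on $\cov$ and reparametrization $r_a\mapsto\rho_a(r_a)$ by functions of one variable, matched on the net side by reparametrization of the coordinate lines—are exactly the gauge freedom already quotiented out on both sides. Because the bijection is between equivalence classes rather than raw data, it compares solution spaces measured in the same gauge, so the Cartan--Kähler count of arbitrary functions of one variable is read off identically from either side. The logical skeleton is then immediate: chaining the reduction to $N=3$ with the $N=3$ case of Theorem~\ref{t:main} and this parameter-count invariance yields the stated equivalence.
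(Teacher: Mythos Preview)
Your proposal is correct and matches the paper's approach: the corollary is stated without proof there, being an immediate consequence of Theorem~\ref{t:main} specialized to $N=3$ together with the Ferapontov--Khusnutdinova reduction principle~\cite{FKh3,FKhn} that $N=3$ suffices. Your added discussion of why the functional-parameter count transfers under the bijection is more explicit than anything in the paper, but it is sound and in the same spirit.
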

In particular, this Corollary applies to generic QLS of types G, H and I,
unifying and extending results in~\cite{BFT,DFKN1,FHK}.

\acknowledge

I would like to thank the Eduard \v Cech Institute, grant GA CR P201/12/G028,
for financial support, and Robert Bryant, Jenya Ferapontov, Boris Kruglikov
and Vladimir Souc\v ek for invaluable discussions.

%
\newcommand{\noopsort}[1]{}
\newcommand{\bauth}[1]{\mbox{#1}}
\newcommand{\bart}[1]{\textit{#1}}
\newcommand{\bjourn}[4]{#1\ifx{}{#2}\else{ \textbf{#2}}\fi{ (#4)}}
\newcommand{\bbook}[1]{\textsl{#1}}
\newcommand{\bseries}[2]{#1\ifx{}{#2}\else{ \textbf{#2}}\fi}
\newcommand{\bpp}[1]{#1}
\newcommand{\bdate}[1]{ (#1)}
\def\band/{and}

\end{document}